\newtheorem{theorem}{Theorem}
\newtheorem{definition}[theorem]{Definition}
\newtheorem{example}[theorem]{Example}
\newtheorem{remark}[theorem]{Remark}
\providecommand{\keywords}[1]{\textbf{\textit{Index terms---}} #1}
\begin{document}



 \title{Blackwell-Nash Equilibrium for Discrete and Continuous Time Stochastic Games}

\author{Vikas Vikram Singh}
\affil{Laboratoire de Recherche en Informatique, Universit\'e Paris Sud, Orsay 91405, France\\
Email: vikas.singh@lri.fr, vikasstar@gmail.com}
\author{N. Hemachandra}
\affil{Industrial Engineering and Operations Research, Indian Institute of Technology Bombay,
Mumbai 400076, India\\
Email: nh@iitb.ac.in}

\maketitle
\begin{abstract}
We consider both discrete and continuous time finite state-action stochastic games. 
In discrete time stochastic games, it is known that a stationary Blackwell-Nash equilibrium (BNE)
exists for a single controller additive reward  (SC-AR) stochastic game which is a special case of a general  stochastic game. 
We show  that,  in general, the additive reward condition is needed for the existence of a
BNE. We give an example of a single controller stochastic game which does not satisfy additive reward condition.
We show that this example does not have a stationary BNE. 
For a general discrete time discounted stochastic game 
we give two different sets of conditions and show that 
a stationary Nash equilibrium that satisfies any set of conditions is 
 a BNE.   
One of these sets of conditions weakens a  set of conditions available in the literature.  
For  continuous time stochastic games, we give an example that does not have a stationary BNE.
In fact, this example is a single controller continuous time stochastic game. 
Then, we introduce a  continuous time  SC-AR stochastic game. We show that 
 there always exists a stationary deterministic BNE for  continuous time  SC-AR stochastic game. 
 For a general continuous time discounted stochastic game we give two different sets of conditions
 and show that  a Nash equilibrium that satisfies any set of conditions is a BNE.
\end{abstract}

\keywords{Stochastic game, Markov decision process, Blackwell-Nash equilibrium.}

\section{Introduction}
Blackwell optimality is a very desirable property of discrete time discounted Markov decision processes. It ensures the existence of
 an optimal policy  for every discount factor close
enough to one.  For finite state-action discrete time Markov decision process (DTMDP), Blackwell \cite{David} 
showed that a stationary deterministic Blackwell 
optimal policy always exists (see also \cite{Putterman}). 
It is natural to extend the concept of Blackwell optimality in discounted MDPs to discounted stochastic games.
Singh et al. \cite{Vikas2} call a strategy pair  Blackwell-Nash equilibrium (BNE)
if it is a Nash equilibrium for every discount factor close enough to one.
In discrete time discounted stochastic games there always exists a stationary Nash equilibrium 
for a fixed discount factor \cite{Fink}, \cite{Takahashi}.
However, the existence of a BNE  is not always guaranteed. This can be seen 
from ``Big Match'' stochastic game example \cite{Gillette}.
  There are some special classes of discrete time stochastic games admitting a stationary 
BNE.  Gimbert and  Zielonka
\cite{Gimbert} showed that a zero sum perfect information stochastic game
 with state dependent discount factors always possesses a stationary deterministic 
Blackwell optimal strategy pair.
Avrachenkov et al. \cite{Avrachenkov} proposed two algorithms to compute a Blackwell optimal strategy pair 
for two player zero sum
perfect information stochastic games. 
 Singh et al. \cite{Vikas2} proposed a single controller additive reward (SC-AR) stochastic game and 
 showed the existence of  a stationary deterministic BNE 
for a general sum SC-AR stochastic game.   
For general sum discounted stochastic games they proposed a set of three conditions which together are sufficient 
for any  of its stationary Nash equilibrium
to be a BNE.
There are not much results known for continuous time stochastic games.  
Recently, Neyman \cite{Neyman-CT} showed that there always exists 
a stationary Nash equilibrium for a finite state-action continuous time discounted stochastic game. 
 He showed the existence of a  stationary Blackwell optimal strategy pair for a two player continuous time zero sum perfect information 
stochastic game. 

In this paper, we consider both discrete and continuous time 2-player finite state-action stochastic games 
with discounted payoff criterion.
For discrete time stochastic game, we strengthen the BNE results given in \cite{Vikas2}.
We first show that the additive reward condition in  SC-AR stochastic game considered in \cite{Vikas2} is needed for the 
existence of BNE.
 We give an example of a 
single controller stochastic game which 
does not satisfy the 
additive reward condition.
We show that this example  does not admit a  stationary BNE. 
For general sum discounted stochastic games we weaken the conditions, given in \cite{Vikas2}, which together are sufficient
for any of its 
stationary Nash equilibrium to be a BNE. In particular, in \cite{Vikas2} the Markov chain induced by 
the Nash equilibrium  
is required to have only 
one absorbing state with all other states being transient; this seems to be a very strong condition. 
We now propose a weaker condition  where 
the Markov chain induced by the Nash equilibrium  satisfies state independent transition (SIT) property. 
 The condition on one period rewards
is suitably modified.
  We also propose another different set of conditions which together 
  are sufficient for a  stationary Nash equilibrium  to be 
a BNE. We now have two disjoint sets of conditions. Hence, it is clear that none of these sets of conditions 
are  necessary.    
Along similar lines of discrete time stochastic games we give the
BNE results for continuous time stochastic games. 
We first give an example that
shows that a stationary BNE need not always exist for a general continuous time 
stochastic game. In fact, the example belong to the class of single controller games.  
Hence, in general even a single controller continuous time stochastic game need not have a BNE.
Then, similar to discrete time stochastic game we introduce 
continuous time SC-AR stochastic game.
We show that there always exists a stationary deterministic 
BNE for continuous time SC-AR stochastic game.
For a general continuous time stochastic game we give two disjoint sets of  conditions such that a 
stationary Nash equilibrium satisfying any set of conditions is a BNE. 

We now describe the structure of the rest of our paper.  Section \ref{SG} contains the BNE results 
for 2-player discrete time stochastic games. Section \ref{CTSG} contains the  BNE results 
for 2-player continuous time stochastic games. We conclude the paper in Section \ref{Conclusions}.

\section{Discrete time stochastic games}\label{SG}
 We first consider discrete time stochastic games. 
We recall the details of the model like dynamics and notations from
\cite{Vikas2}.
 A 2-player stochastic game  is
described by the tuple $(S, A^1, A^2, r^1, r^2, p)$, where 
\begin{enumerate}
\item[(i)]  $S$ is a finite state space. Generic element of $S$ is denoted by $s$.
\item[(ii)] $A^{i}$ is a finite action set of player $i$, $i=1,2$, let $A^i(s)$ denotes the set of
actions available to player $i$ at state $s$, where
$A^i=\bigcup_{s\in S} A^i(s)$.
\item[(iii)]  For player $i$, $i=1,2$, $r^i:\mathcal{K} \rightarrow \mathbb{R}$ is immediate reward function, where 
$\mathcal{K}=~\big\{(s,a^1,a^2)| \ s\in S, \ a^1\in A^1(s), \ a^2\in A^2(s)\big\}$.
 
\item[(iv)]  For a given set $M$, let $\wp(M)$ be the set of all probability measures on $M$.
 The transition law of the game is the function $p:\mathcal{K}\rightarrow \wp(S)$.

\end{enumerate}
A stochastic game proceeds through stages $t=0,1,2,\cdots$. At  stage $t$ the game is in  state $s_t\in S$, 
player 1 chooses an action $a_t^1\in A^1(s_t)$ and 
player 2 chooses an action $a_t^2\in A^2(s_t)$, then player 1 (resp. player 2) receives 
immediate reward $r^1(s_t,a_t^1,a_t^2)$ \big(resp. $r^2(s_t,a_t^1,a_t^2)$\big).
At time $t+1$ game moves to state $s_{t+1}$ with probability $p(s_{t+1}|s_t,a_t^1,a_t^2)$. 
The same thing repeats at $s_{t+1}$  
and game continues for the infinite time horizon. Both the players are interested in maximizing their expected discounted 
reward collected during 
the play over infinite time horizon. 

Define a history at time $t$ as
$h_t=(s_0,a_0^1,a_0^2,s_1,a_1^1,a_1^2,\cdots,s_{t-1},a_{t-1}^1,a_{t-1}^2,s_t)$,
where $s_m\in S$ for $m=0,1,\cdots,t,$ and $a_m^i\in A^i(s_m)$ for $m=0,1,\cdots,t-1$, for all $i=1,2$.
Let $H_t$ denote the set of all possible histories of length $t$.
At time $t$ a decision rule $f_t$ of player 1 (resp., $g_t$ of player 2) assigns to each $h_t\in H_t$ with final state $s_t$ a 
probability measure $f_t(h_t)\in \wp(A^1(s_t))$ \big(resp., $g_t(h_t)\in \wp(A^2(s_t))$\big). 
A sequence of such decision rules is called  history dependent strategy of the game. 
A history dependent strategy is called  Markovian strategy if decision rule at time $t$ depends only on the state at time $t$.
A stationary strategy is a Markovian strategy which does not depend on the time, i.e.,
for a stationary strategy of player 1 (resp., player 2) there exists an $f$ (resp., $g$) such that
$f_t=f$ (resp., $g_t=g$) for all $t$. We denote, with
some abuse of notations, $f$ and
$g$ as stationary strategies of player 1 and player 2 respectively.
Let $F_S$ and $G_S$ denote the sets of all stationary strategies of player 1 and player 2 respectively.
A stationary strategy $f\in F_S$ is identified with $f=\left((f(1))^T,(f(2))^T,\cdots,(f\left(|S|\right))^T\right)^T$,
where for each $s\in S$, $f(s) \in \wp\big(A^1(s)\big)$;
$|M|$ denotes the cardinality of a given set $M$ and $T$ denotes the transposition.
Similarly, a stationary strategy $g\in G_S$ of player 2 is defined.
 It is well known that for a  discrete time stochastic game with discounted payoff criterion there always exists a stationary 
Nash equilibrium (see \cite{Fink}, \cite{Takahashi}). Therefore,  we restrict ourselves 
to stationary strategies.

For an initial state $s\in S$ and a strategy pair $(f,g)$ the expected discounted reward of player $i$, $i=1,2$, is defined as
\begin{equation}\label{disc_value}
 v_\beta^i(s,f,g)=\sum_{t=0}^\infty \beta^t [P^t(f,g)]_s r^i(f,g), 
\end{equation}
where $\beta\in [0,1)$ is a fixed discount factor, and  $P^0(f,g)$ is an identity matrix, 
and $P^t(f,g)$ is a $t$-step stochastic matrix induced by a
strategy pair $(f,g)$,  and $r^i(f,g)$ is a $|S|\times 1$ 
vector of the expected immediate rewards of player $i$ whose $s$th component is
$r^i(s,f,g)=~\sum_{a^1\in A^1(s)}\sum_{a^2\in A^2(s)}f(s,a^1)r^i(s,a^1,a^2)g(s,a^2)$. 
For a given matrix $B$, $[B]_k$ denotes its $k$th row. 
The expected discounted reward  defined by \eqref{disc_value} 
can be written as
$$ v_\beta^i(s,f,g)=[I-\beta P(f,g)]_s^{-1}r^i(f,g), \;\;\forall\; i=1,2,$$
where $I$ denotes an identity matrix.
A strategy pair $(f^*,g^*)\in F_S\times G_S$ is said to be a Nash equilibrium of a discounted stochastic game 
if for all $s\in S$  the following 
inequalities hold simultaneously:
\begin{gather*}
v_{\beta}^{1}(s, f^*,g^*)\geq v_{\beta}^{1}(s, f,g^*),\ \forall \ f\in F_S, \\
v_{\beta}^{2}(s, f^*,g^*)\geq v_{\beta}^{2}(s, f^*,g),\ \forall \ g\in G_S,.
\end{gather*}
It is possible to give the Nash equilibrium definition, in our setting, by 
 restricting to $f\in F_S$ and $g\in G_S$ because when one player's 
strategy is fixed to a stationary strategy, then 
 other player's problem is  a MDP where an optimal strategy exists in the space of stationary strategies.
We now introduce the notations which we use throughout this paper.
For $i=1,2$ and $s, s' \in S$,
\begin{itemize}
\item $R^{i}(s)=\left[r^{i}(s,a^{1},a^{2})\right]_{a^{1}=1,a^{2}=1}^{|A^1(s)|,|A^2(s)|}$, where $R^i(s)$ 
is the reward matrix of player $i$ at state $s$.

\item $v^{i}=\left(v^{i}(1),v^{i}(2),\cdots,v^{i}(|S|)\right)^{T}$.

\item $ P(s'|s)=\left[p(s'|s,a^{1},a^{2})\right]_{a^{1}=1,a^{2}=1}^{|A^1(s)|,|A^2(s)|}$.

\item $\textbf{1}_{n} = (1,1, \cdots, 1)^{T} \in \mathbb{R}^n$.
\end{itemize}

\subsection{Blackwell-Nash equilibrium in discrete time stochastic games} 
A strategy pair  is said to be a BNE if it is Nash equilibrium 
for all the discount factors close enough to one. 
We present some new results that strengthens the results given in \cite{Vikas2}. 
We  show that a stationary BNE
may not always exist if we relax additive reward assumption in SC-AR stochastic
games considered in \cite{Vikas2}.
We give an example of single controller stochastic game
that fails to satisfy the additive reward assumption. We show that there does not exist
a stationary BNE in this game.
For general stochastic games, we propose two disjoint
sets of conditions that are sufficient 
for a Nash equilibrium to be a BNE. One set of conditions are more general than the set of conditions
given in \cite{Vikas2}.
We recall the definition of BNE as given in \cite{Vikas2}. 
\begin{definition}[\cite{Vikas2}]
A strategy pair $(f^*,g^*)$ is said to be a BNE of a discrete time stochastic game if there exists a $\beta_0\in[0,1)$ such 
that $(f^*,g^*)$ is a \mbox{$\beta$-discounted} Nash equilibrium for every $\beta\in[\beta_0,1)$.
\end{definition}
%
%
%

\subsubsection{Single controller stochastic games} \label{SGCT}
In these games the transition probabilities are controlled by only one player. We assume that player 2 controls the transition probabilities, i.e.,
$p(s'|s,a^1,a^2) = p(s'|s,a^2)$ for all $s\in S$, $a^1\in A^1(s)$, $a^2\in A^2(s)$. Singh et al. \cite{Vikas2}  
further assume that the immediate rewards of player 1 satisfy additive condition \eqref{add_reward}  given below.
\begin{equation}\label{add_reward}
 r^1(s,a^1,a^2)=r_1^1(s,a^1)+r^1_2(s,a^2),\ \forall \ s\in S, a^1\in A^1(s), a^2\in A^2(s).
\end{equation}
They call these games  single controller additive reward (SC-AR) stochastic games.
Singh et al. \cite{Vikas2}  showed that there always exists a stationary 
deterministic Blackwell-Nash equilibrium for a SC-AR stochastic game.   We give an example which is a single controller game 
but does not satisfy the additive reward assumption. We show that this game does not have any stationary  Blackwell-Nash 
equilibrium. From this example it is clear that the stationary Blackwell-Nash equilibrium may not always exist 
in single controller stochastic games.

\begin{example} \label{example1}
We consider a  2 states stochastic game  where  both the players have two actions at state 1 and only one action at state 2, i.e.,
$S=\{1,2\}$, $A^1(1)=A^2(1)=~\{1,2\}$, $A^1(2)=A^2(2)=\{1\}$. The immediate rewards of both the players and the transition 
\noindent  probabilities for different combinations 
of states and actions are  summarized in the  Table \ref{Tb1}.  
\begin{table}[ht]\label{Table1}
\centering
\caption{Immediate rewards and Transition Probabilities}
\label{Tb1}
\subfigure[$s=1$]{\resizebox{0.6\linewidth}{.9cm}
{
\begin{tabular}{|r|r|}
\hline
\backslashbox{(4, 9)}{(1, 0)} & \backslashbox{(6, 3)} {(0, 1)}\\
\hline
\backslashbox{(5, 4)}{(1, 0)} &\backslashbox{(4, 5)}{(0, 1)}\\
\hline
\end{tabular}
}}
\subfigure[$s=2$]{\resizebox{0.3\linewidth}{.5cm}
{
\begin{tabular}{|r|}
\hline
\backslashbox{(6, 7)}{(1, 0)} \\
\hline
\end{tabular}.
}}
\end{table}
\end{example}
\noindent 
The rows and columns of the tables represent actions of player 1 and player 2 respectively.   
The upper half of each box of these tables represents transition probabilities 
and lower half represents immediate rewards. For example, if at state 1 both players choose their first action,
player 1 gets 4 and player 2 gets 9, and with probability 1 game  remains in state 1. 
From the above tables it is clear that the game is controlled only by player 2. 
The additive reward condition \eqref{add_reward} for Example \ref{example1} can be written as, 
\begin{align}
 &r^1_1(1,1)+r_2^1(1,1)=4 \label{eq1}\\
& r^1_1(1,1)+r_2^1(1,2)=6\label{eq2}\\
&r^1_1(1,2)+r_2^1(1,1)=5\label{eq3}\\
&r^1_1(1,2)+r_2^1(1,2)=4.\label{eq4}
\end{align}
It follows from  the subtraction  of \eqref{eq1} with \eqref{eq2} and the subtraction of \eqref{eq3} with \eqref{eq4}
that the above system of equations are inconsistent. That is,
the immediate rewards of 
player 1 are not additive.
 
\begin{theorem}\label{Single_Blackwell}
 The discrete time single controller stochastic game given in Example \ref{example1} does not have a stationary Blackwell-Nash equilibrium. 
\end{theorem}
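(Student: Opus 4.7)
The plan is to parametrize the stationary profiles of Example~\ref{example1}, write the value functions in closed form, extract the Nash equilibrium set for each discount factor $\beta\in[0,1)$, and show that this set depends continuously and non-trivially on~$\beta$, ruling out any stationary profile that is an equilibrium on an entire interval $[\beta_0,1)$.

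First I would parametrize. Since state~$2$ has a single action for each player, a stationary profile is determined by the mixing probabilities $x,y\in[0,1]$ that player~$1$ and player~$2$ assign to their action~$1$ at state~$1$. A short multilinear expansion of the reward entries in Table~\ref{Tb1} gives $r^1(1,x,y)=-3xy+2x+y+4$ and $r^2(1,x,y)=7xy-2x-y+5$, while the induced transition matrix $P(y)$ has first row $(y,\,1-y)$ and second row $(1,\,0)$, reconfirming the single-controller structure. Since $\det(I-\beta P(y))=(1-\beta)\bigl[1+\beta(1-y)\bigr]$, the inverse $(I-\beta P(y))^{-1}$ is explicit, so each $v_\beta^i(s,x,y)$ is a tractable rational function of $(x,y,\beta)$.

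Next I would compute best responses. The numerator of $v_\beta^1(s,\cdot,y)$ is affine in $x$ with slope proportional to $(2-3y)$ at both $s=1$ and $s=2$; hence player~$1$'s best response is $x=1$ if $y<2/3$, $x=0$ if $y>2/3$, and arbitrary if $y=2/3$. For player~$2$ one differentiates $v_\beta^2(s,x,y)$ in $y$ via the quotient rule. The main algebraic hurdle is to verify that after cancellation the sign of $\partial_y v_\beta^2(s,x,y)$ is the same at $s=1$ and at $s=2$, and equals the sign of $(7+5\beta)x-(1+3\beta)$; once this is in hand, player~$2$'s best response is $y=1$ if $x>\tfrac{1+3\beta}{7+5\beta}$, $y=0$ if $x<\tfrac{1+3\beta}{7+5\beta}$, and arbitrary at equality.

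Finally I would match best responses. Checking the four corners $(x,y)\in\{0,1\}^2$ rules out any pure Nash equilibrium: since $\tfrac{1+3\beta}{7+5\beta}\in(0,1)$ for every $\beta\in[0,1)$, each corner admits a profitable deviation by one of the two players. Semi-mixed profiles are also excluded because player~$1$'s indifference point $y=2/3$ and player~$2$'s indifference point $x^*(\beta)=\tfrac{1+3\beta}{7+5\beta}$ both lie strictly inside $(0,1)$. Hence the unique Nash equilibrium at discount factor $\beta$ is the fully mixed pair $\bigl(x^*(\beta),\,2/3\bigr)$. A direct differentiation gives $\tfrac{d}{d\beta}x^*(\beta)=16/(7+5\beta)^2>0$, so $x^*(\beta)$ is strictly monotone in~$\beta$. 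Consequently no fixed $x\in[0,1]$ satisfies $x=x^*(\beta)$ on any interval $[\beta_0,1)$, so no stationary strategy pair is a Nash equilibrium for every $\beta$ close to~$1$, and Example~\ref{example1} admits no stationary Blackwell--Nash equilibrium.
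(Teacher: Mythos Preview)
Your proposal is correct and follows essentially the same argument as the paper: parametrize stationary profiles, compute the two best-response correspondences, identify the unique Nash equilibrium $\bigl(\tfrac{1+3\beta}{7+5\beta},\tfrac{2}{3}\bigr)$ for each $\beta$, and conclude from its strict $\beta$-dependence that no stationary BNE exists. The only difference is that for player~2's best response the paper compares the two deterministic strategies $g_1,g_2$ directly (invoking that player~2 faces an MDP, so a deterministic optimum exists), whereas you differentiate $v_\beta^2(s,x,y)$ in~$y$; the simplification you anticipate does go through, so either route works.
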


\begin{proof}
We represent any stationary strategy pair $(f,g)= ((p, 1-p),(q, 1-q))$ for some $0\leq p,q\leq 1$ 
because at state 2 both the players have only one action. 
For a fixed stationary strategy of one player, the best response strategy of other player can be obtained by solving a DTMDP.
It is well known that in DTMDPs there always 
exists a stationary deterministic optimal strategy.   
For a fixed stationary strategy $g=(q, 1-q)$ of player 2, $f^*$ is a best response  of player 1 if and only if for each $s\in S$ 
$$v_\beta^1(s,f^*,g)=\max_{f\in F_S} v_\beta^1(s,f,g).$$ 
As the game is controlled only by player 2, so $f^*$ will be best response of player 1 if  and only if for each $s\in S$
\begin{equation}\label{disc_sing_beq1}
 r^1(s,f^*,g)=\max_{f\in F_S} r^1(s,f,g)= \max_{a^1\in A^1(s)} [R^1(s)g(s)]_{a^1}.
\end{equation}
We need to determine $f^*$ only at state $s=1$ because at $s=2$ there is only one action. We have
$$R^1(1)g(1)=[6-2q,4+q]^T.$$
Let $f_1=(1,0)$ and $f_2=(0,1)$ be two stationary deterministic strategies of player 1.
From \eqref{disc_sing_beq1}, we have

\begin{align}\label{disc_sing_beq2}
f^*= 
\begin{cases} f_1 &  \text{if} \ q<\frac{2}{3} 
\\
f_2 &\text{if} \ q>\frac{2}{3}\\
\big\{(p,1-p):  0\leq p\leq 1\big\} & \text{if} \ q=\frac{2}{3}.
\end{cases}
\end{align}
Equation \eqref{disc_sing_beq2} gives the best response of player 1, when player 2 fixes his strategy as $g=(q,1-q)$, for all $\beta\in [0,1)$.
  
For a fixed stationary strategy $f=(p,1-p)$ of player 1, player 2 faces a DTMDP with immediate rewards 
$\tilde{r}(1,1)=r^2(1,f,1)=4+5p$, $\tilde{r}(1,2)=r^2(1,f,2)=5-2p$, 
$\tilde{r}(2,1)=r^2(2,f,1)=7$ and the same transition probabilities as given in Example \ref{example1}.  Let $g_1=(1,0)$ and $g_2=(0,1)$ be two stationary 
deterministic strategies of player 2. By using the data given in Example \ref{example1} we have

\begin{equation}\label{disc_sing_beq3}
 v_\beta^2(g_1)=[I-\beta P(g_1)]^{-1}\tilde{r}(g_1)= \left[\frac{4+5p}{1-\beta},\frac{(4+5p)\beta}{1-\beta}+7\right]^T.
\end{equation}
\begin{equation}\label{disc_sing_beq4}
 v_\beta^2(g_2)=[I-\beta P(g_2)]^{-1}\tilde{r}(g_2)=\left[\frac{5-2p+7\beta}{1-\beta^2},\frac{(5-2p)\beta+7}{1-\beta^2}\right]^T.
\end{equation}
By using \eqref{disc_sing_beq3} and \eqref{disc_sing_beq4} we have
\begin{equation}\label{disc_sing_beq5}
 v_\beta^2(g_1)-v_\beta^2(g_2)=\left[\frac{p(7+5\beta)-(3\beta+1)}{1-\beta^2}, \frac{\beta(p(7+5\beta)-(3\beta+1))}{1-\beta^2}\right]^T.
\end{equation}
From \eqref{disc_sing_beq5} the best response $g^*$ of player 2 against a fixed strategy $f=(p,1-p)$ of player 1 for a given discount factor $\beta$
is given by \eqref{disc_sing_beq6}
 
\begin{align}\label{disc_sing_beq6}
g^*= 
\begin{cases} g_1 &  \text{if} \ p>\frac{3\beta+1}{7+5\beta} 
\\
g_2 &\text{if} \ p<\frac{3\beta+1}{7+5\beta}\\
\big\{(q,1-q):  0\leq q\leq 1\big\} & \text{if} \ p=\frac{3\beta+1}{7+5\beta}.
\end{cases}
\end{align}
 From \eqref{disc_sing_beq2}
and \eqref{disc_sing_beq6} it is easy to see that for a discount factor 
$\beta$, a strategy pair 
$(f_\beta^*,g_\beta^*)=\left(\left(\frac{3\beta+1}{7+5\beta},\frac{6+2\beta}{7+5\beta}\right),\left(\frac{2}{3},\frac{1}{3}\right)\right)$ 
is such that $f_\beta^*$ and $g_\beta^*$ are 
best responses of each other, i.e., it is a Nash equilibrium.  Next, we show that $(f_\beta^*,g_\beta^*)$ is the unique Nash equilibrium.
Let $(\tilde{f},\tilde{g})=((\tilde{p}, 1-\tilde{p}),(\tilde{q}, 1-\tilde{q}))$ for some $0\leq \tilde{p},\tilde{q} \leq 1$ 
be another Nash equilibrium different from $(f_\beta^*,g_\beta^*)$.
Now, we consider two cases. 

\noindent \textbf{Case I:} Let $\tilde{p}\neq \frac{3\beta+1}{7+5\beta}$. Then we have  two sub cases.
If $\tilde{p}>\frac{3\beta+1}{7+5\beta}$ then from \eqref{disc_sing_beq6} $\tilde{q}=1$. But, from \eqref{disc_sing_beq2} 
the best response of player 1 corresponding to
$\tilde{q}=1$ is $f_2$. This gives the contradiction because $f_2$ does not correspond to  $\tilde{p}>\frac{3\beta+1}{7+5\beta}$. If 
$\tilde{p}<\frac{3\beta+1}{7+5\beta}$   then from \eqref{disc_sing_beq6} $\tilde{q}=0$. 
But, from \eqref{disc_sing_beq2} the best response of player 1 corresponding to
$\tilde{q}=0$ is $f_1$. This again gives the contradiction because $f_1$ does not correspond to $\tilde{p}<\frac{3\beta+1}{7+5\beta}$.
Hence  $\tilde{p}\neq \frac{3\beta+1}{7+5\beta}$ is not possible.

\noindent \textbf{Case II:} If $\tilde{q}\neq \frac{2}{3}$. Again if $\tilde{q}>\frac{2}{3}$ then from \eqref{disc_sing_beq2} $\tilde{p}=0$. 
But, from \eqref{disc_sing_beq6} the best response of player 2 corresponding to  $\tilde{p}=0$ is $g_2$.  
This gives the contradiction from the similar argument given in
Case I. If $\tilde{q}<\frac{2}{3}$ the from \eqref{disc_sing_beq2} $\tilde{p}=1$. But, from \eqref{disc_sing_beq6} 
the best response of player 2 corresponding 
to  $\tilde{p}=1$ is $g_1$ which again gives the contradiction. Hence $\tilde{q}\neq \frac{2}{3}$ is not possible.

From Case I and Case II it is clear that $(f_\beta^*,g_\beta^*)$ is an unique Nash equilibrium for each $\beta$.
As $f_\beta^*$ is an invertible function of $\beta$, then the Nash equilibrium $(f_\beta^*,g_\beta^*)$ 
varies with discount factor $\beta$. This implies that Example \ref{example1} will not have a stationary Blackwell-Nash equilibrium. 
\end{proof}


\subsubsection*{Average Nash equilibrium}
Here we show that 
$\displaystyle\lim_{\beta\uparrow 1} 
(f_\beta^*,g_\beta^*)$=$\left(\left(\frac{1}{3},\frac{2}{3}\right),\left(\frac{2}{3},\frac{1}{3}\right)\right)$ 
is a Nash equilibrium for an average stochastic game. 
For a fixed stationary strategy $g=(q, 1-q)$ of player 2, $f^*$ is a best response  of player 1 if and only if for each $s\in S$ 
$$v_{ea}^1(s,f^*,g)=\max_{f\in F_S} v_{ea}^1(s,f,g)=\max_{f\in F_S} [P^*(g)]_s r^1(f,g),$$ 
where $P^*(g)$ is a Cesaro limit matrix of $P(g)$. So, $f^*$ will be best response of player 1 if  and only if for each $s\in S$
\begin{equation}\label{sing_avg_beq1}
 r^1(s,f^*,g)=\max_{f\in F_S} r^1(s,f,g)= \max_{a^1\in A^1(s)} [R^1(s)g(s)]_{a^1}.
\end{equation}
As similar to Theorem \ref{Single_Blackwell} the best response $f^*$ is given by \eqref{sing_avg_beq2}
\begin{align}\label{sing_avg_beq2}
f^*= 
\begin{cases} f_1 &  \text{if} \ q<\frac{2}{3} 
\\
f_2 &\text{if} \ q>\frac{2}{3}\\
\big\{(p,1-p):  0\leq p\leq 1\big\} & \text{if} \ q=\frac{2}{3}.
\end{cases}
\end{align}
Equation \eqref{sing_avg_beq2} gives the best response of player 1, when player 2 fixes his strategy as $g=(q,1-q)$.
For a fixed stationary strategy $f=(p,1-p)$ of player 1, player 2 faces a MDP with immediate rewards 
$\tilde{r}(1,1)=r^2(1,f,1)=4+5p$, $\tilde{r}(1,2)=r^2(1,f,2)=5-2p$, 
$\tilde{r}(2,1)=r^2(2,f,1)=7$ and the same transition probabilities as given in Example \ref{example1}.  By using the data given in Example \ref{example1} we have
\begin{equation}\label{sing_avg_beq3}
 v_{ea}^2(g_1)= P^*(g_1)\tilde{r}(g_1)= \left[4+5p,4+5p\right]^T.
\end{equation}
\begin{equation}\label{sing_avg_beq4}
 v_\beta^2(g_2)=P^*(g_2)\tilde{r}(g_2)=\left[6-p,6-p\right]^T.
\end{equation}
By using \eqref{sing_avg_beq3} and \eqref{sing_avg_beq4} we have
\begin{equation}\label{sing_avg_beq5}
 v_{ea}^2(g_1)-v_{ea}^2(g_2)=\left[6p-2,6p-2\right]^T.
\end{equation}
From \eqref{sing_avg_beq5} the best response $g^*$ of player 2 against a fixed strategy $f=(p,1-p)$ of player 1 
is given by \eqref{sing_avg_beq6}
 
\begin{align}\label{sing_avg_beq6}
g^*= 
\begin{cases} g_1 &  \text{if} \ p>\frac{1}{3} 
\\
g_2 &\text{if} \ p<\frac{1}{3}\\
\big\{(q,1-q):  0\leq q\leq 1\big\} & \text{if} \ p=\frac{1}{3}.
\end{cases}
\end{align}  
As similar to Theorem \ref{Single_Blackwell} we can show that  
$(f^*_{avg},g^*_{avg})= \left(\left(\frac{1}{3},\frac{2}{3}\right),\left(\frac{2}{3},\frac{1}{3}\right)\right)$ is an unique
Nash equilibrium for an undiscounted game. \\

\subsubsection*{Alternative method} We can also show the same result in a different way.
 Let 
$(f^*_{avg},g^*_{avg})= \left(\left(\frac{1}{3},\frac{2}{3}\right),\left(\frac{2}{3},\frac{1}{3}\right)\right)$. 
Define Car\big($f^*_\beta(s)$\big) = $\{a^1|f_\beta^*(s,a^1)>0\}$ and Car\big($g^*_\beta(s)$\big)=$\{a^2|g_\beta^*(s,a^2)>0\}$. It is easy to see that 
the  Car($f^*_\beta(s)$) and Car($g^*_\beta(s)$) are constant for all $\beta$ and for each $s$.
As $g^*_\beta=g^*_{avg}$ for all $\beta$ and the game is controlled only by player 2, then the Markov chain structure induced by 
$P(f^*_{avg},g^*_{avg})$ will be  same as the one induced by $P(f^*_\beta,g^*_\beta)$. Hence, from Corollary 5.3.9 of \cite{Filar}.

\begin{equation}\label{avg_Nasheq1}
v_{ea}^i(f^*_{avg},g^*_{avg})= \lim_{\beta\uparrow 1} (1-\beta) v_\beta^i(f_\beta^*,g_\beta^*), \ \forall \ i =1,2.
\end{equation}
For all $f\in F_S$, we have 
\begin{align}\label{avg_Nasheq2}
 v_{ea}^1(f,g_{avg}^*)&=\lim_{\beta\uparrow 1}(1-\beta) v_\beta^1(f,g_\beta^*)\nonumber\\
&\leq \lim_{\beta\uparrow 1} (1-\beta) v_\beta^1(f_\beta^*,g_\beta^*)\nonumber\\
&=v_{ea}^1(f^*_{avg},g^*_{avg}).
\end{align}
First equality above comes from the fact that $g^*_\beta=g^*_{avg}$ for all $\beta$ and then
inequality  comes from the fact that $(f_\beta^*,g_\beta^*)$ is a Nash equilibrium for all $\beta$. The last equality is due to \eqref{avg_Nasheq1}. 
For all $g\in G_S$, we have

\begin{align}\label{avg_Nasheq3}
 v_{ea}^2(f_{avg}^*,g)&=\lim_{\beta\uparrow 1} (1-\beta) v_\beta^2(f_\beta^*,g)\nonumber\\
&\leq \lim_{\beta\uparrow 1} (1-\beta)v_\beta^2(f_\beta^*,g_\beta^*)\nonumber\\
&=v_{ea}^2(f^*_{avg},g^*_{avg}).
\end{align}
First equality above comes from Corollary 5.3.9 of \cite{Filar} and then inequality  comes from the fact that $(f_\beta^*,g_\beta^*)$ is 
a Nash equilibrium for all $\beta$. The last equality is due to \eqref{avg_Nasheq1}. 
From \eqref{avg_Nasheq2} and \eqref{avg_Nasheq3} $(f^*_{avg},g^*_{avg})$ is a Nash equilibrium of an undiscounted stochastic game. 

%


\subsubsection{Sufficient conditions for Blackwell-Nash equilibrium}\label{Improved_Blackwell-cond}
 We give two different sets of conditions, i.e., a stationary Nash equilibrium that satisfies one set of 
conditions does not satisfy other set of conditions.  
As similar in \cite{Vikas2} we show that a stationary Nash equilibrium of a discounted stochastic game satisfying 
these sets of conditions is a Blackwell-Nash equilibrium. The set of conditions given in \cite{Vikas2}  come under  
a special case of the first set of conditions 
given here. Thus, these results are more general than given in \cite{Vikas2}. Since, there are two different sets of 
conditions for the existence 
of Blackwell-Nash equilibrium, then it is clear that
both sets of conditions are only sufficient but not necessary. 
For a Nash equilibrium $(f^*,g^*)$, 
 we denote $a_s^1\in A^1(s)$ (resp., $a_s^2\in A^2(s)$) as action of player 1
(resp., player 2) such that $f^*(s,a_s^1)=1$ (resp.,  $g^*(s,a_s^2)=1$) for all $s\in S$.
We use these notations throughout the paper.\\

\subsubsection*{First set of sufficient conditions}    
The first set of conditions are as follows:

 \noindent \textbf{C1}. $(f^*,g^*)$ is a pure strategy Nash equilibrium of a discounted stochastic game.

\noindent \textbf{C2}.
$$
P(f^*,g^*)=
\begin{matrix}
\begin{pmatrix}
  p_1 & p_2 &  \cdots & p_{|S|} \\
  p_1 & p_2 &  \cdots & p_{|S|} \\
  \vdots&  \vdots&  &\vdots\\
  p_1 & p_2 &  \cdots & p_{|S|}
 \end{pmatrix}
\end{matrix}
$$
where $p_s\geq 0, \ \forall \ s\in S, \ \sum_{s\in S}p_s=1$. The Markov chain induced by $(f^*,g^*)$ 
satisfies the state independent transition (SIT) property. 
\medskip

\noindent\textbf{C3}. 

$\left\{ \begin{array}{l}\displaystyle
\sum_{s'\in S} p_{s'} \ r^1(s',a_{s'}^1,a_{s'}^2)\geq \sum_{s'\in S} p(s'|s,a^1,a_s^2) r^1(s',a_{s'}^1,a_{s'}^2),
\; \forall\; s\in S, a^1\in A^1(s),\\ \vspace{.2cm}\\ \displaystyle
\sum_{s'\in S} p_{s'} \ r^2(s',a_{s'}^1,a_{s'}^2)\geq \sum_{s'\in S} p(s'|s,a_s^1,a^2) r^2(s',a_{s'}^1,a_{s'}^2),
\; \forall\; s\in S, a^2\in A^2(s).
\end{array}\right.$


\begin{remark}
If $p_{\bar{s}}=1$ for some $\bar{s}\in S$ and $p_s=0,\ \forall \ s\in S$, $s\neq \bar{s}$ then \textbf{C2} and \textbf{C3} correspond to 
the conditions given in \cite{Vikas2}. 
\end{remark}

\begin{remark}
 A pure Nash equilibrium of a SIT stochastic game will always satisfy the conditions \textbf{C1} and \textbf{C2}. 
\end{remark}

It is known that there is a one to one correspondence between the stationary Nash equilibria of a discounted stochastic game 
and the global minimizers, with objective function value zero, of a non-convex constrained optimization problem [OP] given below   
(see \cite{Thuijsman}, \cite{Filar}). We denote the decision variables and the objective function of [OP] by
$x=\left((v^1)^T,(v^2)^T,f^T,g^T\right)^T$ and $\psi(x)$ respectively.
{\allowdisplaybreaks
\begin{align*}
&\textup{{[OP]}} \quad
 \min_{x}\sum_{k=1}^{2}\textbf{1}_{|S|}^T\big[v^{k}-r^{k}(f,g)-\beta P(f,g)v^{k}\big] \\
&\text{s.t.}\\
& (i)~ R^{1}(s)g(s)+\beta\sum_{s'\in S}P(s'|s)g(s)v^{1}(s')
  \leq v^{1}(s)\textbf{1}_{|A^1(s)|}, \;\; \forall\; s\in S \\
& (ii)~ (f(s))^TR^{2}(s)+\beta\sum_{s'\in S}(f(s))^T P(s'|s)v^{2}(s')
 \leq  v^{2}(s)\textbf{1}^{T}_{|A^2(s)|}, \;\; \forall\; s\in S\\
& (iii)~ \sum_{a^1\in A^1(s)}f(s,a^{1})=1, \;\; \forall\; s\in S\\
& (iv)~ \sum_{a^2\in A^2(s)}g(s,a^{2})=1, \;\; \forall\; s\in S\\
& (v)~ f(s,a^{1}) \geq  0,  \;\; \forall\; s\in S,\; a^1\in A^1(s)\\
& (vi)~ g(s,a^{2}) \geq  0, \;\; \forall\; s\in S,\; a^2\in A^2(s).
\end{align*}
}
 
\begin{theorem}\label{Blackwell_disc_thm1}
 If $(f^*,g^*)$ is a stationary Nash equilibrium of a discrete time 
 discounted stochastic game at some discount factor $\hat{\beta}\in [0,1)$
and satisfies the conditions \textbf{C1}, \textbf{C2} and \textbf{C3}, then it will be a Blackwell-Nash equilibrium. 
\end{theorem}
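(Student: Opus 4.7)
The strategy is to exploit the idempotent structure of the transition matrix forced by C2 to obtain a closed form for the equilibrium value, and then invoke the single-player one-step deviation principle for a discounted MDP to reduce best-response checks to inequalities that are linear and nondecreasing in $\beta$.

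First, I would observe that under C2, $P(f^*,g^*) = \mathbf{1}_{|S|}\, p^T$ with $p = (p_1,\ldots,p_{|S|})^T$, so $P(f^*,g^*)^2 = \mathbf{1}_{|S|}\,p^T\,\mathbf{1}_{|S|}\,p^T = P(f^*,g^*)$. The Neumann expansion then collapses to $[I-\beta P(f^*,g^*)]^{-1} = I + \tfrac{\beta}{1-\beta}\,P(f^*,g^*)$, giving, for $i=1,2$ and every $s\in S$,
$$v_\beta^{i}(s,f^*,g^*) = r^{i}(s,a_s^1,a_s^2) + \frac{\beta}{1-\beta}\,C^{i}, \qquad C^{i} := \sum_{s'\in S} p_{s'}\, r^{i}(s',a_{s'}^1,a_{s'}^2).$$

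Second, since player 2's strategy $g^*$ is fixed, player 1 faces a DTMDP; by the Bellman (one-step deviation) principle, the pure stationary strategy $f^*$ is player 1's best response at discount factor $\beta$ if and only if for every $s\in S$ and every $a^1\in A^1(s)$,
$$r^{1}(s,a^1,a_s^2) + \beta \sum_{s'\in S} p(s'|s,a^1,a_s^2)\,v_\beta^{1}(s',f^*,g^*) \le v_\beta^{1}(s,f^*,g^*).$$
Substituting the closed form from step one, the $\tfrac{\beta^2}{1-\beta}C^{1}$ and $\tfrac{\beta}{1-\beta}C^{1}$ terms combine into $\beta C^{1}$, leaving the equivalent condition
$$A(s,a^1) + \beta\,B(s,a^1) \ge 0,$$
where $A(s,a^1) := r^{1}(s,a_s^1,a_s^2) - r^{1}(s,a^1,a_s^2)$ and $B(s,a^1) := C^{1} - \sum_{s'\in S} p(s'|s,a^1,a_s^2)\,r^{1}(s',a_{s'}^1,a_{s'}^2)$.

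Third, the first line of C3 gives $B(s,a^1)\ge 0$, while the hypothesis that $(f^*,g^*)$ is a Nash equilibrium at $\hat{\beta}$ gives $A(s,a^1) + \hat{\beta}\,B(s,a^1) \ge 0$. Since $B(s,a^1)\ge 0$, the affine map $\beta\mapsto A(s,a^1)+\beta B(s,a^1)$ is nondecreasing in $\beta$, so the inequality persists for every $\beta\in[\hat{\beta},1)$. A symmetric argument using the second line of C3 shows $g^*$ remains player 2's best response against $f^*$ for all $\beta\in[\hat{\beta},1)$. Setting $\beta_0:=\hat{\beta}$ then yields the BNE property directly from the definition.

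The only delicate step is the algebraic reduction in the second paragraph, where the Neumann tail $\tfrac{\beta}{1-\beta}C^{1}$ appearing on both sides of the Bellman inequality must cancel cleanly so that what remains is exactly linear in $\beta$; once that collapse is verified, the conclusion is immediate monotonicity, and no further topological or compactness argument is required.
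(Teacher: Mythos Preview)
Your proof is correct and follows essentially the same route as the paper: both compute the closed form $v_\beta^{i*}(s)=r^i(s,a_s^1,a_s^2)+\tfrac{\beta}{1-\beta}C^i$ from the SIT structure, then reduce each player's best-response check to an inequality that is affine in $\beta$ with nonnegative slope by \textbf{C3}. The only cosmetic difference is that the paper phrases the best-response check via the constraints of the nonlinear program [OP] and computes explicit thresholds $\beta_{s,a^1}^1,\beta_{s,a^2}^2\le\hat\beta$ before taking their maximum as $\beta_0$, whereas you invoke the one-step deviation principle directly and use monotonicity to take $\beta_0=\hat\beta$; your version is slightly more economical but the underlying argument is identical.
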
 
\begin{proof} We prove this by using the similar argument given in \cite{Vikas2}.
 Let $(f^*,g^*)$ be  a stationary Nash equilibrium of a  discounted stochastic game at some discount factor $\hat{\beta}$ 
and satisfies the conditions \textbf{C1}, \textbf{C2} and \textbf{C3}.
Define, the vector $v_\beta^{i*}=v_\beta^i(f^*,g^*)=\left[I-\beta P(f^*,g^*)\right]^{-1}r^i(f^*,g^*)$, $i=1,2$, where
\[ \left[I-\beta P(f^*,g^*)\right]^{-1}
=\frac{1}{1-\beta}
\begin{matrix}
\begin{pmatrix}
1-\beta+\beta p_1 ~& \beta p_2 &\cdots &\beta p_{|S|}\\
\beta p_1 ~& 1-\beta + \beta p_2 &\cdots &\beta p_{|S|}\\
\vdots&\vdots & & \vdots \\
\beta p_1 ~& \beta p_2 &\cdots & 1-\beta+\beta p_{|S|}
\end{pmatrix}
\end{matrix}.
\]
The value vector $v_\beta^{i*}, i=1,2$, can be written as a function of $\beta$ as,
\[
v_\beta^{i*}=
\begin{pmatrix}
v_\beta^{i*}(1)\\
v_\beta^{i*}(2)\\
\vdots \\
v_\beta^{i*}(|S|)
\end{pmatrix}
=
\begin{pmatrix}
 r^i(1,a_1^1,a_1^2)+\frac{\beta}{1-\beta}\sum_{s\in S}  p_s r^i(s,a_s^1,a_s^2) \\
 r^i(2,a_2^1,a_2^2)+\frac{\beta}{1-\beta}\sum_{s\in S}  p_s r^i(s,a_s^1,a_s^2)\\
\vdots \\
r^i(|S|,a_{|S|}^1,a_{|S|}^2)+\frac{\beta}{1-\beta}\sum_{s\in S}  p_s r^i(s,a_s^1,a_s^2)
\end{pmatrix}.
\]
Let $x^*= \left((v_\beta^{1*})^T,(v_\beta^{2*})^T,f^{*T},g^{*T}\right)^T$, then
 $\psi(x^*)=0$ for all $\beta\in [0,1)$. 
To show that $(f^*,g^*)$ is a
Blackwell-Nash equilibrium, it is sufficient  to show that there exists a $\beta_0\in [0,1)$ such that  $x^*$ is a
feasible point of the optimization problem [OP] for all $\beta\in [\beta_0,1)$.
At $x^*$ the constraints $(i)$ and $(ii)$ of [OP] can be written as
\[
r^1(s,a^1,a_s^2)+\beta\sum_{s'\in S} p(s'|s,a^1,a_s^2)v_\beta^{1*}(s')\leq v_\beta^{1*}(s),\;\; \forall \; s\in S,\; a^1\in A^1(s).
\]
\[
 r^2(s,a_s^1,a^2)+\beta\sum_{s'\in S} p(s'|s,a_s^1,a^2)v_\beta^{2*}(s')\leq v_\beta^{2*}(s),\;\; \forall\; s\in S,\; a^2\in A^2(s).
\]
For all $s\in S$, $a^1\in A^1(s)$, define
\[
 \theta_{s,a^1}^1=r^1(s,a^1,a_s^2)+\beta\sum_{s'\in S} p(s'|s,a^1,a_s^2)v_\beta^{1*}(s')- v_\beta^{1*}(s).
\]
For all $s\in S$, $a^2\in A^2(s)$, define
\[
 \theta_{s,a^2}^2= r^2(s,a_s^1,a^2)+\beta\sum_{s'\in S} p(s'|s,a_s^1,a^2)v_\beta^{2*}(s')- v_\beta^{2*}(s).
\]
Now, we consider two cases

\noindent\textbf{Case I:} ~
For each $s\in S$ we have two sub cases as given below.
\vspace{.2cm} 

\noindent If $a^1\in A^1(s)$ is such that $a^1=a_s^1$, then
\begin{align*}
 \theta_{s,a^1_s}^1&=r^1(s,a_s^1,a_s^2)+\beta \sum_{s'\in S} p_{s'} \ v_\beta^{1*}(s')-v_\beta^{1*}(s)\\
&=r^1(s,a_s^1,a_s^2)+ \beta \sum_{s'\in S}p_{s'}\left(r^1(s',a_{s'}^1,a_{s'}^2)+\frac{\beta}{1-\beta}\sum_{\tilde{s}\in S}  p_{\tilde{s}} \ 
r^1(\tilde{s},a_{\tilde{s}}^1,a_{\tilde{s}}^2)\right)\\
&\hspace{3cm}-r^1(s,a_s^1,a_s^2)-\frac{\beta}{1-\beta}\sum_{\tilde{s}\in S}  p_{\tilde{s}} \ r^1(\tilde{s},a_{\tilde{s}}^1,a_{\tilde{s}}^2)\\
&=0, \;\; \forall\; \beta.
\end{align*}
If $a^1\in A^1(s)$ is such that $a^1\neq a_s^1$, then,
\begin{align*}
&\theta_{s,a^1}^1=r^1(s,a^1,a_s^2)+\beta\sum_{s'\in S} p(s'|s,a^1,a_s^2)\left(r^1(s',a_{s'}^1,a_{s'}^2)
+\frac{\beta}{1-\beta}\sum_{\tilde{s}\in S}  p_{\tilde{s}} \ r^1(\tilde{s},a_{\tilde{s}}^1,a_{\tilde{s}}^2) \right)\\
&\hspace{4cm}-r^1(s,a_{s}^1,a_{s}^2)-\frac{\beta}{1-\beta}\sum_{\tilde{s}\in S}  p_{\tilde{s}} \ r^1(\tilde{s},a_{\tilde{s}}^1,a_{\tilde{s}}^2).
\end{align*}
That is
\begin{align}\label{Blackwell_e2_1}
\theta_{s,a^1}^1&=\left(r^1(s,a^1,a_s^2)-r^1(s,a_s^1,a_s^2)\right)\nonumber\\
&\hspace{2.5cm}-\beta\left(\sum_{s'\in S}  p_{s'} \ r^1(s',a_{s'}^1,a_{s'}^2)-\sum_{s'\in S}p(s'|s,a^1,a_s^2)r^1(s',a_{s'}^1,a_{s'}^2)\right).
\end{align}
When $\sum_{s'\in S}  p_{s'} \ r^1(s',a_{s'}^1,a_{s'}^2)=\sum_{s'\in S}p(s'|s,a^1,a_s^2) r^1(s',a_{s'}^1,a_{s'}^2)$, then, 
\eqref{Blackwell_e2_1} is independent of $\beta$.  Hence,
 $\theta_{s,a^1}^1\leq 0, \; \forall \; \beta$ because it holds for $\hat{\beta}$.
In other cases from \textbf{C3} we have, $\theta^1_{s,a^1}\leq 0, \;\; \forall\; \beta\geq \beta^1_{s,a^1}$,
where
\begin{equation}\label{Bl_bound1_1}
\beta_{s,a^1}^1=\frac{\left[r^1(s,a^1,a_s^2)-r^1(s,a_s^1,a_s^2)\right]}
{\left[\sum_{s'\in S}  p_{s'} \ r^1(s',a_{s'}^1,a_{s'}^2)-\sum_{s'\in S}p(s'|s,a^1,a_s^2)r^1(s',a_{s'}^1,a_{s'}^2)\right]}.
\end{equation}
It is clear that $\beta^1_{s,a^1}\leq\hat{\beta}<1$ because $(f^*,g^*)$ is a Nash equilibrium at $\hat{\beta}\in [0,1)$ 
and hence each constraint of [OP] is satisfied by $(f^*,g^*)$ at $\hat{\beta}$.

\noindent\textbf{Case II:} ~ 
As similar to Case I, for each $s\in S$ we have two sub cases.
\vspace{.2cm}

\noindent If $a^2\in A^2(s)$ is such that $a^2=a_s^2$, then, 
$\theta_{s,a_s^2}^2 = 0\;\; \forall \;\beta.$
If $a^2\in A^2(s)$ is such that $a^2\neq a_s^2$, then,
\begin{align}\label{Blackwell_e4_1}
 \theta_{s,a^2}^2&=\left(r^2(s,a_s^1,a^2)-r^2(s,a_s^1,a_s^2)\right)\nonumber\\
&\hspace{2.5cm}-\beta\left(\sum_{s'\in S}  p_{s'} \ r^2(s',a_{s'}^1,a_{s'}^2)
-\sum_{s'\in S} p(s'|s,a_s^1,a^2)
r^2(s',a_{s'}^1,a_{s'}^2)\right).
\end{align}
When $\sum_{s'\in S}  p_{s'} \ r^2(s',a_{s'}^1,a_{s'}^2)=\sum_{s'\in S} p(s'|s,a_s^1,a^2)r^2(s',a_{s'}^1,a_{s'}^2)$, 
then, \eqref{Blackwell_e4_1} is independent of $\beta$ and hence 
$\theta_{s,a^2}^2\leq 0, \; \forall \; \beta$.
In other cases from \textbf{C3} we have, $\theta_{s,a^2}^2\leq 0,\;\; \forall \; \beta\geq \beta_{s,a^2}^2$,
where
\begin{equation} \label{Bl_bound2_1}
\beta_{s,a^2}^2=\frac{\left[r^2(s,a_s^1,a^2)-r^2(s,a_s^1,a_s^2)\right]}{\left[\sum_{s'\in S}  p_{s'} \ r^2(s',a_{s'}^1,a_{s'}^2)
-\sum_{s'\in S} p(s'|s,a_s^1,a^2)r^2(s',a_{s'}^1,a_{s'}^2)\right]}.
\end{equation}
From the same argument as used in Case I, we have $\beta^2_{s,a^2}<1$. 
Now, define
\begin{equation}\label{Blackwell_bound_1}
 \beta_0=\max_{s\in S}\max_{a^1\in A^1(s);a^1\neq a^1_s}\max_{a^2\in A^2(s);a^2\neq a^2_s}\{0,\beta_{s,a^1}^1,\beta_{s,a^2}^2\}
\end{equation}
whenever $\beta_{s,a^1}^1$ and $\beta_{s,a^2}^2$ are well defined.
We include ``0'' in \eqref{Blackwell_bound_1} because lower bounds $\beta_{s,a^1}^1$, $\beta_{s,a^2}^2$ 
defined in \eqref{Bl_bound1_1}, \eqref{Bl_bound2_1} respectively can be negative also. It is clear that $\beta_0\in [0,1)$.
It is easy to see that the constraints $(i)$ and $(ii)$ of the optimization problem [OP] are feasible at
$x^*=\left((v_\beta^{1*})^T,(v_\beta^{2*})^T,f^{*T},g^{*T}\right)^T$ for all $\beta\in [\beta_0,1)$. 
The other constraints $(iii)$-$(vi)$ of [OP] does not depend on $\beta$ and are feasible at $(f^*,g^*)$. 
At $x^*$ the objective function value of [OP] is zero and all the constraints are feasible for all $\beta\in [\beta_0,1)$ 
which means that $(f^*,g^*)$ is a Blackwell-Nash equilibrium.
\end{proof}

\subsubsection*{Second set of sufficient conditions}
The second set of conditions are as follows:

 \noindent\textbf{D1}. $(f^*,g^*)$ is a pure strategy Nash equilibrium of a discounted stochastic game.

\noindent\textbf{D2}. The Markov chain induced by $(f^*,g^*)$ reduces into $|S|$  ergodic classes where each 
class contains only one state, i.e., 
$$
P(f^*,g^*)=
\begin{matrix}
\begin{pmatrix}
  1 & 0 &  \cdots & 0 \\
  0 & 1 &  \cdots & 0 \\
  \vdots&  \vdots&  &\vdots\\
  0 & 0 &  \cdots & 1
 \end{pmatrix}.
\end{matrix}
$$
\medskip 
\noindent\textbf{D3}.

$\left\{ \indent \begin{array}{l}\displaystyle
 r^1(s,a^1,a_{s}^2)\geq \sum_{s'\in S} p(s'|s,a^1,a_s^2) r^1(s',a_{s'}^1,a_{s'}^2),
\;\; \forall\; s\in S, a^1\in A^1(s),\\ \vspace{.2cm}\\\displaystyle
 r^2(s,a_{s}^1,a^2)\geq \sum_{s'\in S} p(s'|s,a_s^1,a^2) r^2(s',a_{s'}^1,a_{s'}^2),
\;\; \forall\; s\in S, a^2\in A^2(s).
\end{array}\right.$


\begin{theorem}\label{Blackwell_disc_thm2}
 If $(f^*,g^*)$ is a stationary Nash equilibrium of a discounted stochastic game at some discount factor $\hat{\beta}\in [0,1)$
and satisfies the conditions \textbf{D1}, \textbf{D2} and \textbf{D3}, then it will be a Blackwell-Nash equilibrium. 
\end{theorem}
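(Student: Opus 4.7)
The plan is to adapt the [OP]-feasibility argument from the proof of Theorem \ref{Blackwell_disc_thm1}, with the substantial simplification that \textbf{D2} gives $P(f^*,g^*)=I$ and hence $[I-\beta P(f^*,g^*)]^{-1}=\frac{1}{1-\beta}I$. The candidate value vector collapses to $v_\beta^{i*}(s)=r^i(s,a_s^1,a_s^2)/(1-\beta)$ for every $s\in S$ and $i=1,2$, and we set $x^*=((v_\beta^{1*})^T,(v_\beta^{2*})^T,f^{*T},g^{*T})^T$. Because $x^*$ satisfies the Bellman equation $v_\beta^{i*}=r^i(f^*,g^*)+\beta P(f^*,g^*)v_\beta^{i*}$, the [OP] objective $\psi(x^*)$ equals zero; constraints $(iii)$--$(vi)$ are $\beta$-free and obviously hold at $(f^*,g^*)$, so the task reduces to verifying that constraints $(i)$ and $(ii)$ of [OP] remain feasible at $x^*$ for every $\beta$ in some left neighborhood of $1$.

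Introduce the slacks $\theta_{s,a^1}^1$ and $\theta_{s,a^2}^2$ exactly as in the proof of Theorem \ref{Blackwell_disc_thm1}. When $a^1=a_s^1$, \textbf{D2} gives $p(s'|s,a_s^1,a_s^2)=1$ for $s'=s$ and $0$ otherwise, and direct substitution yields $\theta_{s,a_s^1}^1=r^1(s,a_s^1,a_s^2)-(1-\beta)v_\beta^{1*}(s)=0$ for every $\beta$. For a deviation $a^1\neq a_s^1$, multiplying through by the positive quantity $(1-\beta)$ produces
\begin{equation*}
(1-\beta)\theta_{s,a^1}^1=\bigl[r^1(s,a^1,a_s^2)-r^1(s,a_s^1,a_s^2)\bigr]+\beta\Bigl[\sum_{s'\in S}p(s'|s,a^1,a_s^2)r^1(s',a_{s'}^1,a_{s'}^2)-r^1(s,a^1,a_s^2)\Bigr],
\end{equation*}
which is affine in $\beta$ with slope equal to the bracketed expression; this slope is non-positive by \textbf{D3}. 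Hence the right-hand side, viewed as a function of $\beta$, is non-increasing. Since $(f^*,g^*)$ is a Nash equilibrium at $\hat\beta$, we have $\theta_{s,a^1}^1\leq 0$ and therefore $(1-\beta)\theta_{s,a^1}^1\leq 0$ at $\beta=\hat\beta$, and by the monotonicity just noted this inequality persists for every $\beta\in[\hat\beta,1)$. The second inequality of \textbf{D3} handles the player-$2$ slacks $\theta_{s,a^2}^2$ by the identical calculation.

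Taking $\beta_0=\hat\beta$, the point $x^*$ is feasible for [OP] with objective value zero on the whole interval $[\beta_0,1)$. Via the bijection between zero-value global minimizers of [OP] and stationary Nash equilibria of the $\beta$-discounted game, this proves that $(f^*,g^*)$ is a $\beta$-discounted Nash equilibrium for every $\beta\in[\beta_0,1)$, i.e., a Blackwell-Nash equilibrium. The only subtle step is the algebraic simplification that makes the $\beta$-coefficient in the displayed expression coincide exactly with the quantity controlled by \textbf{D3}; once this is in place the whole argument reduces to a one-line monotonicity observation, strictly shorter than the per-action threshold bookkeeping required in Theorem \ref{Blackwell_disc_thm1}, because \textbf{D2} imposes $P(f^*,g^*)=I$ in place of the weaker SIT structure \textbf{C2}.
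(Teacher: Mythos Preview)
Your proof is correct and follows essentially the same [OP]-feasibility argument as the paper: identify $v_\beta^{i*}(s)=r^i(s,a_s^1,a_s^2)/(1-\beta)$ from \textbf{D2}, observe $\psi(x^*)=0$, and reduce everything to showing the slacks $\theta_{s,a^i}^i$ stay non-positive on an interval $[\beta_0,1)$. The only cosmetic difference is that the paper computes explicit per-action thresholds $\beta_{s,a^i}^i$ and sets $\beta_0$ to their maximum (yielding a possibly sharper bound $\beta_0\le\hat\beta$), whereas you invoke the affine monotonicity of $(1-\beta)\theta_{s,a^i}^i$ directly and take $\beta_0=\hat\beta$; both reach the same conclusion.
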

\begin{proof}
We prove this by using the similar argument as in the proof of Theorem \ref{Blackwell_disc_thm1}.
  Let $(f^*,g^*)$ be  a stationary Nash equilibrium of a  discounted stochastic game at some discount factor $\hat{\beta}$ 
and satisfies the conditions \textbf{D1}, \textbf{D2} and \textbf{D3}.
Let $x^*=\big((v_\beta^{1*})^T,(v_\beta^{2*})^T,f^{*T},g^{*T}\big)^T$ where 
\[
 v_\beta^{i*}= [I-\beta P(f^*,g^*)]^{-1} r^i(f^*,g^*)=
\left(
\frac{r^i(1,a_1^1,a_1^2)}{1-\beta},  \\
 \frac{r^i(2,a_2^1,a_2^2)}{1-\beta},\\
\cdots, \\
\frac{r^i\big(|S|,a_{|S|}^1,a_{|S|}^2\big)}{1-\beta} 
\right),
\]
for $i=1,2$.
From the construction of the objective function of [OP], $\psi(x^*)=0$ for all $\beta\in [0,1)$. 
To show that $(f^*,g^*)$ is a
Blackwell-Nash equilibrium, it is sufficient to show that there exists a $\beta_0\in [0,1)$ such that  $x^*$ is a
feasible point of the optimization problem [OP] for all $\beta\in [\beta_0,1)$.  We discuss two cases. 

\noindent\textbf{Case I:} ~
For each $s\in S$ we have two sub cases as given below.
\vspace{.2cm} 

\noindent If $a^1\in A^1(s)$ is such that $a^1=a_s^1$, then,
 $\theta_{s,a^1_s}^1=0, \; \forall\; \beta$.
If $a^1\in A^1(s)$ is such that $a^1\neq a_s^1$, then,
\begin{align*}
&\theta_{s,a^1}^1=r^1(s,a^1,a_s^2)+\beta\sum_{s'\in S} p(s'|s,a^1,a_s^2)\frac{r^1(s',a_{s'}^1,a_{s'}^2)}{1-\beta}
-\frac{r^1(s,a_{s}^1,a_{s}^2)}{1-\beta}.
\end{align*}
That is,
\begin{align}\label{Blackwell_e2_2}
\theta_{s,a^1}^1&=\frac{1}{1-\beta}\left(r^1(s,a^1,a_s^2)-r^1(s,a_s^1,a_s^2)\right)\nonumber\\
&\hspace{3cm}-\frac{\beta}{1-\beta}\left(r^1(s,a^1,a_{s}^2)-\sum_{s'\in S}p(s'|s,a^1,a_s^2)r^1(s',a_{s'}^1,a_{s'}^2)\right).
\end{align}
When $r^1(s,a^1,a_{s}^2)=\sum_{s'\in S}p(s'|s,a^1,a_s^2) r^1(s',a_{s'}^1,a_{s'}^2)$, then, 
\eqref{Blackwell_e2_2} is independent of $\beta$ and hence
 $\theta_{s,a^1}^1\leq 0, \; \forall \; \beta$.
In other cases from \textbf{D3} we have, $\theta^1_{s,a^1}\leq 0, \;\; \forall\; \beta\geq \beta^1_{s,a^1}$,
where
\begin{equation}\label{Bl_bound1_2}
\beta_{s,a^1}^1=\frac{\left[r^1(s,a^1,a_s^2)-r^1(s,a_s^1,a_s^2)\right]}
{\left[r^1(s,a^1,a_{s}^2)-\sum_{s'\in S}p(s'|s,a^1,a_s^2)r^1(s',a_{s'}^1,a_{s'}^2)\right]}.
\end{equation}
It is clear that $\beta^1_{s,a^1}\leq\hat{\beta}<1$ because $(f^*,g^*)$ is a Nash equilibrium at $\hat{\beta}\in [0,1)$ 
and hence each constraint of [OP] is satisfied by $(f^*,g^*)$ at $\hat{\beta}$.

\noindent\textbf{Case II:} ~ 
As similar to Case I, for each $s\in S$ we have two sub cases.
\vspace{.2cm}

\noindent If $a^2\in A^2(s)$ is such that $a^2=a_s^2$, then, 
$\theta_{s,a_s^2}^2 = 0\;\; \forall \;\beta.$
If $a^2\in A^2(s)$ is such that $a^2\neq a_s^2$, then,

\begin{align}\label{Blackwell_e4_2}
 \theta_{s,a^2}^2&=\frac{1}{1-\beta}\left(r^2(s,a_s^1,a^2)-r^2(s,a_s^1,a_s^2)\right)\nonumber\\
&\hspace{3cm}-\frac{\beta}{1-\beta}\left(r^2(s,a_{s}^1,a^2)
-\sum_{s'\in S} p(s'|s,a_s^1,a^2)
r^2(s',a_{s'}^1,a_{s'}^2)\right).
\end{align}
When $r^2(s,a_{s}^1,a^2)=\sum_{s'\in S} p(s'|s,a_s^1,a^2)r^2(s',a_{s'}^1,a_{s'}^2)$, 
then, \eqref{Blackwell_e4_2} is independent of $\beta$ and hence 
$\theta_{s,a^2}^2\leq 0, \; \forall \; \beta$.
In other cases from \textbf{D3} we have, $\theta_{s,a^2}^2\leq 0,\;\; \forall \; \beta\geq \beta_{s,a^2}^2$,
where
\begin{equation} \label{Bl_bound2_2}
\beta_{s,a^2}^2=\frac{\left[r^2(s,a_s^1,a^2)-r^2(s,a_s^1,a_s^2)\right]}{\left[r^2(s,a_{s}^1,a^2)
-\sum_{s'\in S} p(s'|s,a_s^1,a^2)r^2(s',a_{s'}^1,a_{s'}^2)\right]}.
\end{equation}
From the same argument as used in Case I, we have $\beta^2_{s,a^2}<1$. 
Now, define
\begin{equation}\label{Blackwell_bound_2}
 \beta_0=\max_{s\in S}\max_{a^1\in A^1(s);a^1\neq a^1_s}\max_{a^2\in A^2(s);a^2\neq a^2_s}\{0,\beta_{s,a^1}^1,\beta_{s,a^2}^2\}
\end{equation}
whenever $\beta_{s,a^1}^1$ and $\beta_{s,a^2}^2$ are well defined.
Now, at $x^*$ the objective function value of [OP] is zero and the constraints are feasible for all $\beta\in [\beta_0,1)$ 
which means that $(f^*,g^*)$ is a Blackwell-Nash equilibrium.
\end{proof}


\noindent Next, we give an example where a stationary Nash equilibrium of the discounted game at $\beta=0.6$
 satisfies the conditions \textbf{D1}, \textbf{D2} and \textbf{D3} 
and hence it is a Blackwell-Nash equilibrium from Theorem \ref{Blackwell_disc_thm2}.

\begin{example}\label{Ex-sec-set}
We consider a  stochastic game where there are 2 states and  both the players have two 
actions at state 1 and  only one action at state 2 , i.e.,
$S=\{1,2\}$, $A^1(1)= A^2(1)= \{1,2\}$, $A^1(2)=A^2(2)=\{1\}$. 
The rewards of both the players and the transition probabilities for different combinations 
of states and actions are summarized in the Table \ref{Tb2}. 

\begin{table}[ht]
\centering
\caption{Immediate rewards and Transition Probabilities}
\label{Tb2}
\subfigure[$s=1$]{\resizebox{0.6\linewidth}{.9cm}
{
\begin{tabular}{|r|r|}
\hline
\backslashbox{(4,4.4)}{(1,0)} & \backslashbox{(4,5)} {(0,1)}\\
\hline
\backslashbox{(5,6)}{(0,1)} &\backslashbox{(3,2)}{(1,0)}\\
\hline
\end{tabular}
}}
\subfigure[$s=2$]{\resizebox{0.3\linewidth}{.5cm}
{
\begin{tabular}{|r|}
\hline
\backslashbox{(3,4)}{(0, 1)} \\
\hline
\end{tabular}.
}}
\end{table} 
\end{example}
\noindent 
In the above game there are two stationary deterministic strategies for each player.
We denote the stationary deterministic strategies of player 1 by $f_1=(1,0)$ and $f_2=(0,1)$
and the stationary deterministic strategies of player 2 by $g_1=(1,0)$ and $g_2=(0,1)$.  
We prove that  $(f_1,g_1)=\left((1,0), (1,0)\right)$ is a Blackwell-Nash equilibrium of the game. We first show that $(f_1,g_1)$ is a 
Nash equilibrium at $\beta=0.6$. Using the data given in Table \ref{Tb2}, we have 
$v_{0.6}^1(f_1,g_1)=(10,7.5)$, $v_{0.6}^2(f_1,g_1)=(11,10)$, $v_{0.6}^1(f_2,g_1)=(9.5,7.5)$, $v_{0.6}^2(f_1,g_2)=(11,10)$.
That is 
\begin{equation}\label{Ex_e5}
 v_{0.6}^1(f_1,g_1)\geq v_{0.6}^1(f_2,g_1).
\end{equation}
\begin{equation}\label{Ex_e6}
 v_{0.6}^2(f_1,g_1)\geq v_{0.6}^2(f_1,g_2).
\end{equation}
From \eqref{Ex_e5} and \eqref{Ex_e6} $(f_1,g_1)$ is a Nash equilibrium 
because for a fixed stationary strategy of one player, other player faces a 
MDP where optimal strategy exists in the space of stationary 
deterministic strategies (see \cite{Putterman}). 
 It is easy to verify that \textbf{D1}, \textbf{D2} and \textbf{D3}
hold at $(f_1,g_1)$, i.e., it is a Blackwell-Nash equilibrium. From  \eqref{Blackwell_bound_2},
$\beta_0=0.6$, so $(f_1,g_1)$ is a Nash equilibrium 
for all $\beta\in [0.6,1).$

\section{Continuous time stochastic game} \label{CTSG}
 We recall the definition of a continuous time stochastic game from \cite{Neyman-CT}.
  Similar to a discrete time stochastic game,  $S$ denote a finite set of states, 
and  $A^i(s)$ denote  a finite set of actions of player $i$ 
   available at state $s\in S$, and $r^i$ is an immediate payoff function of player $i$. 
  For all $s, s'\in S$ such that $s'\ne s$, and $a^1\in A^1(s)$, $a^2\in A^2(s)$,
 let $\mu(s',s,a^1,a^2)\ge 0$ be a rate of transition
  from state $s$ to state $s'$,  when player 1 and player 2 choose actions $a^1$ and $a^2$ respectively.
Denote $\mu(s,s,a^1,a^2)=-\sum_{s'\neq s} \mu(s',s,a^1,a^2)$.
 At time $t\in [0,\infty)$, if state is $s$, and player 1 plays an action $a^1$, and player 2 plays an action 
 $a^2$ during  infinitesimal time $dt$,
the payoff of player~1 is $r^1(s,a^1,a^2)dt$, and the payoff of player 2 is $r^2(s,a^1,a^2)dt$. 
  A transition  from $s$ to $s'$
occurs with
probability $\mu(s',s,a^1,a^2) dt $. It stays in state $s$ with probability $1+\mu(s,s,a^1,a^2)dt$. 
In the former case, the sojourn time at state $s$ follows an exponential distribution with parameter $-\mu(s,s,a^1,a^2)\ge 0$. 
  
A play of a continuous time stochastic game is a measurable function $h:[0,\infty)\rightarrow S\times \wp(A^1)\times \wp(A^2)$,
$t\mapsto h(t)=(s_t,x^1_t,x^2_t)$, with $x_t^i\in \wp(A^i(s_t))$, $i=1,2$.  
Given a play $h$, we define $h_t$ as history up to time $t$ as the restriction of the first coordinate of 
$h$  to the time interval $[0,t]$ and the restriction of the second and third coordinate to $[0,t)$.  The above 
definitions of play and  history 
in continuous time stochastic game are due to Neyman \cite{Neyman-CT}
where players observe their past mixed actions unlike the pure actions in discrete games. The decision of choosing action 
at any time $t$ might depend on various factors and it leads to different class of strategies. 
The case where decision of choosing  an action at any time $t$ depends on the entire history up to time $t$  
defines the history dependent strategies while for Markov strategies decision making  
 depend only on time $t$ and the state at time $t$. 
 The  stationary strategies are defined by the decision making rules 
that depend only on the states. The definition of stationary strategy $f$ (resp., $g$) 
of player 1 (resp., player 2) is same as in discrete time stochastic game.
Unlike in discrete time stochastic games, a strategy pair $(\pi_1,\pi_2)$ and an initial state $s_0$ 
need not define unambiguously a probability 
distribution $\mathbb{P}_{\pi_1,\pi_2}^{s_0}$
over plays of continuous time stochastic game.  A strategy profile $(\pi_1,\pi_2)$ is an admissible strategy profile, if for a given 
initial state $s_0$,  probability 
distributions $\mathbb{P}_{\pi_1',\pi_2}^{s_0}$ and $\mathbb{P}_{\pi_1,\pi_2'}^{s_0}$ 
over plays of continuous time stochastic game are unambiguously defined for all $\pi_1'$ and $\pi_2'$.
The class of Markov strategies and stationary strategies are contained in the class of
admissible strategies.
For a stationary strategy pair $(f,g)\in F_S\times G_S$ and initial state $s_0$, a unique probability 
distribution $P_{f,g}^{s_0}$ satisfies the equality,
 $$P^{s_0}_{f,g}(s_{t+\delta}=s|h_t)=\delta \mu(s_0,s_t,f(s_t),g(s_t))+o(\delta),$$
where, $\mu(s_0,s_t,f(s_t),g(s_t))=\sum_{a^1\in A^1(s_t)}\sum_{a^2\in A^2(s_t)}\mu(s_0,s_t,a^1,a^2)f(s_t,a^1)g(s_t,a^2)$.
For the details about all the definitions given above see \cite{Neyman-CT}. 
The existence of a stationary Nash equilibrium in the discounted continuous time stochastic 
game restricted to Markov strategies appears in \cite{Guo2}.
Neyman \cite{Neyman-CT} showed the existence of stationary Nash equilibrium 
by allowing history dependent strategies.
 Therefore, from now onwards we restrict ourselves to
the class of stationary strategies. 
 
 For a given strategy pair $(f,g)$ and an initial state $s$, the expected discounted reward of player $i$, $i=1,2$, is given by 
 \begin{equation}
  v_\alpha^i(s,f,g)=E_{f,g}^s\int_0^\infty e^{-\alpha t}r^i(s_t,x_t^1,x_t^2) dt,
 \end{equation}
where $\alpha>0$ is a discount rate. 
 A strategy pair $(f^*,g^*)$ is said to be an $\alpha$-discounted Nash equilibrium if for all $s\in S$  
  the following inequalities 
 hold,
 \begin{gather*}
  v_\alpha^1(s,f^*,g^*)\geq v_\alpha^1(s,f,g^*),\ \forall \ f\in F_S,\\
  v_\alpha^2(s,f^*,g^*)\geq v_\alpha^2(s,f^*,g),\ \forall \ g\in G_S.
 \end{gather*}
We call strategy pair $(f^*,g^*)$   Nash equilibrium  despite restricting $f$ and $g$ as stationary strategies. 
 This is possible because for a fixed stationary strategy of 
 one player, other player's problem is a continuous time Markov decision process (CTMDP)
 where an optimal strategy exists in the space of stationary  strategies \cite{Putterman}, \cite{Guo}. 
 \subsection{Some preliminary results and notations}
  We give some preliminary results which are useful in the subsequent analysis. 
  Define, 
   \[
||\mu||=\sup_{s\in S, a^1\in A^1(s),a^2\in A^2(s)}\left(\sum_{s'\in S;s'\neq s}\mu(s',s,a^1,a^2)\right).  
 \]
  For a fixed stationary strategy $g$ of player 2, player 1 faces 
 a CTMDP($g$). The immediate rewards and transition rates of CTMDP($g$) are respectively given by 
 $r^1(s,a^1,g)=\sum_{a^2\in A^2(s)} r^1(s,a^1,a^2)g(s,a^2)$
 and $\mu(s',s,a^1,g)=\sum_{a^2\in A^2(s)} \mu(s',s,a^1,a^2) g(s,a^2)$
 for all $s,s'\in S$, $a^1\in A^1(s)$. 
 It is well known that using uniformization technique  a CTMDP can be solved by an equivalent DTMDP
 \cite{Serfozo} \cite{Guo}. 
  The rewards, transition probabilities, and discount factor of  DTMDP($g$) equivalent to CTMDP($g$) are 
  given by, 
  \begin{equation}\label{pl1-eqdata}
   \left.
   \begin{aligned}
    \bar{r}^1(s,a^1)&=\frac{r^1(s,a^1,g)}{||\mu||+\alpha},\ \forall \ s\in S, a^1\in A^1(s),\\ 
  p^1(s'|s,a^1)&=\frac{\mu(s',s,a^1,g)}{||\mu||}+\delta(s,s'), \ \forall \ s,s'\in S, a^1\in A^1(s), \\
 \beta&=\frac{||\mu||}{\alpha+||\mu||},
   \end{aligned}
\right\}
  \end{equation}
  where $\delta(\cdot)$ is a Kronecker delta.
 Similarly, for a fixed stationary strategy $f$ of player 1, player 2 faces a 
 CTMDP($f$). The immediate rewards and transition rates  of CTMDP($f$) are respectively given by 
 $r^2(s,f,a^2)=\sum_{a^1\in A^1(s)} r^2(s,a^1,a^2)f(s,a^1)$ and 
 $\mu(s',s,f,a^2)=\sum_{a^1\in A^1(s)} \mu(s',s,a^1,a^2) f(s,a^1)$
 for all $s,s'\in S$, $a^2\in A^2(s)$. 
 The rewards, transition probabilities, and discount factor of  DTMDP($f$) equivalent to CTMDP($f$) are 
  given by,
 \begin{equation}\label{pl2-eqdata}
 \left. 
 \begin{aligned}
   \bar{r}^2(s,a^2)&=\frac{r^2(s,f,a^2)}{||\mu||+\alpha}, \ \forall \ s\in S, a^1\in A^1(s),\\
 p^2(s'|s,a^2)&=\frac{\mu(s',s,f,a^2)}{||\mu||}+\delta(s,s'), \ \forall \ s,s'\in S, a^2\in A^2(s), \\ 
 \beta&=\frac{||\mu||}{\alpha+||\mu||}.
 \end{aligned}
\right\}
 \end{equation}
The continuous time stochastic game is defined using the transition rates. 
Let $Q(f,g)$  denote the transition  rate matrix 
induced by a stationary strategy pair $(f,g)$, where
$Q(f,g)=[\mu(s',s,f,g)]_{ss'}.$

 \subsection{Blackwell-Nash equilibrium in continuous time stochastic games}
 A  BNE for  continuous time stochastic games can be defined similar to discrete time stochastic games as follows:
 \begin{definition}
 A strategy pair $(f^*,g^*)$ is said to be a BNE of a continuous time stochastic game 
 if there exists an $\alpha_0>0$ such that $(f^*,g^*)$ is an 
 $\alpha$-discounted Nash equilibrium for every $\alpha\in (0,\alpha_0]$.
\end{definition}
 We provide the results on BNE for continuous time stochastic games along similar lines. 
We first show that a stationary BNE in general continuous time stochastic  games need not alway exist.
We give  an example of a single controller continuous time stochastic game where stationary BNE does not exist.
This example shows that BNE need not always exist even for single controller games which is a special 
class of general stochastic games.
Then, we show the existence of a stationary deterministic BNE 
for continuous time SC-AR stochastic games. 
Finally, for general continuous time stochastic games we give two different sets of conditions and show that each set of conditions
together are sufficient 
for a Nash  equilibrium to be a BNE. 

 \subsubsection{A counter example}
 We give an example which does not have any stationary BNE. 
\begin{example}\label{Cont-SC-AR-Ex1}
We consider a continuous time stochastic game with 2 states and both players having two actions 
at state 1 and only one action at state 2, i.e.,
$S=\{1,2\}$, $A^1(1)=A^2(1)=\{1,2\}$, $A^1(2)=A^2(2)=\{1\}$. The rewards of both the players and transition rates for 
different combination of states and actions are summarized in the Table \ref{Tb2c}. The upper half of 
each box of table represents transition rates and lower half represents immediate rewards.

\begin{table}[ht]
\centering 
\caption{Immediate rewards and Transition Rates} 
\label{Tb2c}
\subfigure[$s=1$]{\resizebox{0.6\linewidth}{.9cm}
{
\begin{tabular}{|r|r|}
\hline
\backslashbox{(4, 9)}{(0, 0)} & \backslashbox{(6, 3)} {(-1, 1)}\\
\hline
\backslashbox{(5, 4)}{(0, 0)} &\backslashbox{(4, 5)}{(-1, 1)}\\
\hline
\end{tabular}
}}
\subfigure[$s=2$]{\resizebox{0.3\linewidth}{.5cm}
{
\begin{tabular}{|r|}
\hline
\backslashbox{(6, 7)}{(1, -1)} \\
\hline
\end{tabular}.
}}
\end{table} 
\end{example}
The Example \ref{Cont-SC-AR-Ex1} can be viewed as a continuous time version of \mbox{Example \ref{example1}}.
We show that the Example \ref{Cont-SC-AR-Ex1} does not  possess a stationary BNE. 
Let \mbox{$(f,g)=((p,1-p),(q,1-q))$}, for some $0\leq p,q\leq 1$, be an arbitrary stationary strategy pair.
For fixed $g$, player 1 faces 
a CTMDP($g$). From the data of the game $||\mu||=1$. The CTMDP($g$)  
is equivalent to the DTMDP($g$) defined by \eqref{pl1-eqdata}. 
The transition probabilities of DTMDP($g$)
do not depend on the actions of player 1 because the 
transition rates do not depend on the actions of player~1. So,  $f^*$ is an optimal policy 
of player 1 for DTMDP($g$) if and only if for each $s\in S$,
\begin{equation}\label{sing_beq1}
 \bar{r}^1(s,f^*)=\max_{f\in F_S} \bar{r}^1(s,f)=\frac{1}{1+\alpha}\max_{a^1\in A^1(s)}[R^1(s)g(s)]_{a^1}.
\end{equation}
We need to determine $f^*$ only at state $s=1$. We have, 
$$R^1(1)g(1)=[6-2q,4+q]^T.$$
Let $f_1=(1,0)$ and $f_2=(0,1)$ be two stationary deterministic strategies of player 1.
From \eqref{sing_beq1}, 

\begin{align}\label{sing_beq2}
f^*= 
\begin{cases} f_1 &  \text{if} \ q<\frac{2}{3} 
\\
f_2 &\text{if} \ q>\frac{2}{3}\\
\big\{(p,1-p):  0\leq p\leq 1\big\} & \text{if} \ q=\frac{2}{3}.
\end{cases}
\end{align}
Equation \eqref{sing_beq2} gives the optimal policy of player 1 for DTMDP($g$)  for all \mbox{$\beta\in [0,1)$}.
 Therefore, $f^*$ gives an optimal policy of 
CTMDP($g$) for all $\alpha>0$. That is,
$f^*$ gives the best response of player 1 for all $\alpha>0$ for a fixed 
strategy $g=(q,1-q)$ of player~2.

Similarly, for a fixed $f=(p,1-p)$, player 2 faces a  CTMDP($f$). 
The  equivalent DTMDP($f$) is defined by \eqref{pl2-eqdata}. 
Let $g_1=(1,0)$ and $g_2=(0,1)$ be two stationary 
deterministic strategies of player 2. By using the data given in Example \ref{Cont-SC-AR-Ex1}, the value vector of player 2 
for DTMDP($f$)  is given below:
\begin{equation}\label{sing_beq3}
 u_{\beta}^2(f,g_1)=[I-\beta P(g_1)]^{-1}\bar{r}^2(g_1)= \beta\left[\frac{4+5p}{1-\beta},\frac{(4+5p)\beta}{1-\beta}+7\right]^T.
\end{equation}
\begin{equation}\label{sing_beq4}
 u_{\beta}^2(f,g_2)=[I-\beta P(g_2)]^{-1}\bar{r}^2(g_2)=\beta\left[\frac{5-2p+7\beta}{1-\beta^2},
 \frac{(5-2p)\beta+7}{1-\beta^2}\right]^T.
\end{equation}
By using \eqref{sing_beq3} and \eqref{sing_beq4} we have,
\begin{equation}\label{sing_beq5}
 u_{\beta}^2(f,g_1)-u_{\beta}^2(f,g_2)=\beta
 \left[\frac{p(7+5\beta)-(3\beta+1)}{1-\beta^2}, \frac{\beta(p(7+5\beta)-(3\beta+1))}{1-\beta^2}\right]^T.
\end{equation}
By substituting $\beta=\frac{1}{1+\alpha}$ in \eqref{sing_beq5}, the  difference in the value vector of CTMDP($f$)
is given by 
\begin{equation}\label{sing_beq5'}
  v_\alpha^2(f,g_1)-v_\alpha^2(f,g_2)=\left[\frac{p(12+7\alpha)-(4+\alpha)}{\alpha(\alpha+2)}, 
  \frac{p(12+7\alpha)-(4+\alpha)}{\alpha(\alpha+2)(1+\alpha)}\right]^T.
\end{equation}
From \eqref{sing_beq5'} the best response $g^*$ of player 2 against a fixed strategy 
$f=(p,1-p)$ of player 1 for a given discount rate $\alpha$
is given by \eqref{sing_beq6}
 
\begin{align}\label{sing_beq6}
g^*= 
\begin{cases} g_1 &  \text{if} \ p>\frac{4+\alpha}{12+7\alpha} 
\\
g_2 &\text{if} \ p<\frac{4+\alpha}{12+7\alpha} \\
\big\{(q,1-q):  0\leq q\leq 1\big\} & \text{if} \ p=\frac{4+\alpha}{12+7\alpha} .
\end{cases}
\end{align}
 From \eqref{sing_beq2}
and \eqref{sing_beq6} it is easy to see that for a discount rate 
$\alpha$, a strategy pair 
$(f_\alpha^*,g_\alpha^*)=
\left(\left(\frac{4+\alpha}{12+7\alpha},\frac{8+6\alpha}{12+7\alpha}\right),\left(\frac{2}{3},\frac{1}{3}\right)\right)$ 
is such that $f_\alpha^*$ and $g_\alpha^*$ are 
best responses of each other, i.e., it is a Nash equilibrium.  The uniqueness of $(f_\alpha^*,g_\alpha^*)$ follows from the 
similar arguments used in Example \ref{example1}.
Since, $f_\alpha^*$ is an invertible function of $\alpha$, then the Nash equilibrium $(f_\alpha^*,g_\alpha^*)$ 
varies with discount rate $\alpha$. This implies that Example~\ref{Cont-SC-AR-Ex1} will not have a stationary BNE. 

From Example \ref{Cont-SC-AR-Ex1} it is clear that in general 
a continuous time stochastic game need not admit a stationary BNE.
In fact Example \ref{Cont-SC-AR-Ex1} belongs 
to the class of single controller games. So, even for the class of single controller games there is 
no guarantee that a stationary BNE will exist.
 Next, we describe SC-AR stochastic games which is a special class of
single controller games. 
Similar to discrete  case  we show that there always 
exists a stationary deterministic BNE. 

 \subsubsection{Single Controller Additive Reward Games}
   A continuous time SC-AR stochastic game  is characterized by the following assumptions:
 \begin{itemize}
  \item [(a)]$\mu(s',s,a^1,a^2)=\mu(s',s,a^2)$ for all $s',s\in S$, $a^1\in A^1(s)$, $a^2\in A^2(s)$, i.e., the transition rates only depend 
  on the actions of player 2. 
  \item [(b)] $r^1(s,a^1,a^2)=r_1^1(s,a^1)+r_2^1(s,a^2)$, for all $s\in S$, $a^1\in A^1(s)$, $a^2\in A^2(s)$.
 \end{itemize}
 
 \begin{theorem}
  Every continuous time SC-AR stochastic game possesses a stationary deterministic 
  BNE.
 \end{theorem}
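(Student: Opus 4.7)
The plan is to reduce the continuous time SC-AR game to an equivalent discrete time SC-AR stochastic game by uniformization, and then invoke the stationary deterministic BNE result for discrete time SC-AR games established in \cite{Vikas2}.

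First, for each $\alpha > 0$ I would define an equivalent DTSG on the same state and action sets with rewards $\bar r^i(s,a^1,a^2) = r^i(s,a^1,a^2)/(||\mu|| + \alpha)$, transition probabilities $\bar p(s'|s,a^1,a^2) = \mu(s',s,a^1,a^2)/||\mu|| + \delta(s,s')$, and discount factor $\beta(\alpha) = ||\mu||/(\alpha + ||\mu||)$. Because for any fixed opponent stationary strategy each player's best-response CTMDP is equivalent under uniformization to the DTMDP given in \eqref{pl1-eqdata} or \eqref{pl2-eqdata}, a stationary pair $(f,g)$ is an $\alpha$-discounted Nash equilibrium of the continuous time game if and only if it is a $\beta(\alpha)$-discounted Nash equilibrium of this DTSG.

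Next, I would verify that the equivalent DTSG is itself SC-AR: since $\mu(s',s,a^1,a^2) = \mu(s',s,a^2)$, the induced $\bar p(s'|s,a^1,a^2)$ depends only on $a^2$, so player $2$ remains the sole controller; and since $r^1(s,a^1,a^2) = r_1^1(s,a^1) + r_2^1(s,a^2)$, the rescaled reward $\bar r^1(s,a^1,a^2) = r_1^1(s,a^1)/(||\mu||+\alpha) + r_2^1(s,a^2)/(||\mu||+\alpha)$ splits additively. Applying the discrete time SC-AR theorem of \cite{Vikas2} to this equivalent DTSG then produces a stationary deterministic pair $(f^*,g^*)$ and a threshold $\beta_0 \in [0,1)$ such that $(f^*,g^*)$ is a $\beta$-discounted Nash equilibrium of the DTSG for every $\beta \in [\beta_0,1)$.

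To finish, note that $\beta(\alpha)$ is continuous and strictly decreasing in $\alpha$ with $\beta(\alpha) \to 1^-$ as $\alpha \to 0^+$, so there exists $\alpha_0 > 0$ with $\beta(\alpha) \in [\beta_0,1)$ for every $\alpha \in (0,\alpha_0]$. The uniformization equivalence then implies that $(f^*,g^*)$ is an $\alpha$-discounted Nash equilibrium of the CTSG for every such $\alpha$, i.e.\ a stationary deterministic BNE. The only delicate point I would flag is checking that the \emph{same} uniformization constant $||\mu||$ works for both players' DTMDPs so that the equilibrium equivalence is symmetric and $\alpha$-uniform; this is automatic because $||\mu||$ depends only on the game data. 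Beyond that, the argument is essentially a transfer of the discrete time SC-AR theorem to the continuous time setting via the map $\alpha \mapsto \beta(\alpha)$.
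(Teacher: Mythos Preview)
Your argument is correct and takes a somewhat different route from the paper's.

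The paper constructs the BNE directly: player~1's strategy $f^*$ is defined as the pointwise stage-game maximizer $a_s^{1*}\in\arg\max_{a^1} r_1^1(s,a^1)$; by the single-controller and additive-reward structure this $f^*$ is automatically a best response to \emph{every} $g$ at \emph{every} $\alpha$. With $f^*$ fixed, only player~2's one-player CTMDP is uniformized, and Blackwell's theorem for the resulting DTMDP produces $g^*$ together with the threshold $\beta_0$ (and hence $\alpha_0$). In contrast, you uniformize the entire two-player game, verify that the resulting discrete game is itself SC-AR, and then invoke the discrete SC-AR BNE theorem of \cite{Vikas2} as a black box, transferring the conclusion back through the monotone bijection $\alpha\mapsto\beta(\alpha)$. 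Your route is more modular and makes transparent that the continuous time statement is a corollary of the discrete one; the paper's route is more self-contained and in effect re-runs the inner argument of \cite{Vikas2} in the continuous setting.

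One small point of exposition deserves attention: your equivalent DTSG carries rewards $r^i/(||\mu||+\alpha)$, so the game data depend on $\alpha$, and the phrase ``applying the discrete time SC-AR theorem to this equivalent DTSG'' is formally ambiguous about which fixed game you mean. The cleanest fix is to observe that multiplying all rewards by a common positive scalar leaves the set of $\beta$-discounted Nash equilibria unchanged; hence you may equivalently work with the $\alpha$-independent DTSG having rewards $r^i$ and transition probabilities $\bar p(\cdot|\cdot)=\mu/||\mu||+\delta$. The SC-AR theorem applied to this single game yields an $\alpha$-independent pair $(f^*,g^*)$ and threshold $\beta_0$, and your translation to $\alpha_0$ then goes through verbatim.
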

 
 \begin{proof}
  For each $s\in S$ select an action $a_s^{1*}\in A^1(s)$ such that 
  $a_s^{1*}\in \underset{a^1\in A^1(s)}{\operatorname{argmax}}\{r_1^1(s,a^1)\}$. 
  Define $f^*\in F_S$ by 
  \begin{equation}\label{SC-AR_pol}
 f^*(s,a^1)=
\begin{cases} 1 & \text{if $a^1=a_s^{1*}$,}
\\
0 &\text{otherwise}
\end{cases}
\end{equation}
for each $s\in S$. For above strategy $f^*$ of player 1, player 2 faces a CTMDP($f^*$).
The equivalent DTMDP($f^*$) is defined by \eqref{pl2-eqdata}.
For DTMDP($f^*$) there always exists a stationary deterministic strategy $g^*$ which is  Blackwell optimal \cite{David}. 
Then, there exists  a discount factor $\beta_0$ such that $g^*$ is an optimal strategy 
for all $\beta\in [\beta_0,1)$, i.e., for all $s\in S$,
\begin{equation}\label{BO-eq}
 u_{\beta}^2(s,f^*,g^*)\geq u_{\beta}^2(s,f^*,g), \ \forall \ g\in G_S , \beta \in [\beta_0,1),
\end{equation}
where $u_{\beta}^2(s,f^*,g)$ is the expected discounted reward of the DTMDP($f^*$) 
for a given initial state $s$ and strategy $g$. 
From \cite{Serfozo}, $v_\alpha^2(s,f^*,g)=u_{\beta}^2(s,f^*,g)$, for all $s\in S$, where relationship between $\alpha$ and
$\beta$ is given by \eqref{pl2-eqdata}. Then, for discount factor $\beta_0$ we have 
discount rate $\alpha_0= ||\mu||\big(\frac{1-\beta_0}{\beta_0}\big)$. Therefore, from \eqref{BO-eq} we have for all $s\in S$
\begin{equation}\label{BNE_eq1}
 v_\alpha^2(s,f^*,g^*)\geq v_\alpha^2(s,f^*,g), \ \forall \ g\in G_S, \alpha\in(0,\alpha_0].
\end{equation}
From \eqref{SC-AR_pol}, we have 
\begin{equation}\label{SC-AR_e1}
 r^1(f^*,g^*)\geq r^1(f,g^*), \;\; \forall\;   f\in F_S.
\end{equation}
Because the transitions rates do not depend on the strategies $f\in F_S$, therefore, we have for all $s\in S$, 
\begin{equation}\label{BNE_eq2}
 v_\alpha^1(s,f^*,g^*)\geq v_\alpha^1(s, f, g^*), \ \forall \ f\in F_S, \alpha>0.
\end{equation}
From \eqref{BNE_eq1} and \eqref{BNE_eq2}, $(f^*,g^*)$ is a BNE.
\end{proof}

\subsubsection{Sufficient conditions for BNE in general stochastic games}
 We consider a two player general continuous time stochastic game with discounted payoff criterion. 
 We give two disjoint  sets of conditions where each set of conditions together are sufficient for a stationary Nash equilibrium 
 to be a BNE.\\

\noindent \textbf{First set of sufficient conditions:}\\
\noindent \textbf{M1}. $(f^*,g^*)$ is a pure strategy Nash equilibrium.\\

 \noindent \textbf{M2}. $$Q(f^*,g^*)=\begin{matrix}||\mu||
\begin{pmatrix}
  p_1-1 & p_2 &  \cdots & p_{|S|} \\
  p_1 & p_2-1 &  \cdots & p_{|S|} \\
  \vdots&  \vdots&  &\vdots\\
  p_1 & p_2 &  \cdots & p_{|S|}-1
 \end{pmatrix}
\end{matrix},
$$
where $p_s\geq 0, \ \forall \ s\in S, \ \sum_{s\in S}p_s=1$. \\

\noindent\textbf{M3}. $\left\{ \begin{array}{l}\displaystyle
\sum_{s'\in S} p_{s'} \ r^1(s',a_{s'}^1,a_{s'}^2)\geq \sum_{s'\in S} \left(\frac{\mu(s',s,a^1,a_s^2)}{||\mu||}+\delta(s,s')\right) 
r^1(s',a_{s'}^1,a_{s'}^2),\\
\hspace{7.7cm}\forall \ s\in S, a^1\in A^1(s),\\ \vspace{.2cm}\\ \displaystyle
\sum_{s'\in S} p_{s'} \ r^2(s',a_{s'}^1,a_{s'}^2)\geq \sum_{s'\in S} \left(\frac{\mu(s',s,a_s^1,a^2)}{||\mu||}+\delta(s,s')\right) 
r^2(s',a_{s'}^1,a_{s'}^2),\\
\hspace{7.7cm} \forall\; s\in S, a^2\in A^2(s).
\end{array}\right.$
\smallskip


\begin{theorem}\label{Blackwell_disc_cont_thm1}
 If $(f^*,g^*)$ is a stationary Nash equilibrium of a discounted continuous time stochastic game at some discount rate $\hat{\alpha}>0$
and satisfies the conditions \textbf{M1}, \textbf{M2} and \textbf{M3}, then it will be a BNE. 
\end{theorem}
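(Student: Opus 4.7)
The plan is to piggy-back directly on Theorem \ref{Blackwell_disc_thm1} via the uniformization bridge already recorded in \eqref{pl1-eqdata} and \eqref{pl2-eqdata}. Associate to the given continuous time game the auxiliary discrete time stochastic game with transition probabilities $\bar{p}(s'|s,a^1,a^2)=\mu(s',s,a^1,a^2)/\|\mu\|+\delta(s,s')$, rewards $\bar{r}^i(s,a^1,a^2)=r^i(s,a^1,a^2)/(\|\mu\|+\alpha)$, and discount factor $\beta=\|\mu\|/(\|\mu\|+\alpha)$. Applying Serfozo's uniformization to each player's CTMDP (player 1 against the fixed stationary strategy of player 2, and vice-versa) shows that the $\alpha$-discounted value of any stationary pair in the CT game equals its $\beta$-discounted value in the uniformized DT game. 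Consequently $(f^*,g^*)$ is an $\alpha$-discounted Nash equilibrium in the CT game iff it is a $\beta$-discounted Nash equilibrium in the auxiliary DT game, with $\beta \uparrow 1$ exactly as $\alpha \downarrow 0$.

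Next I would verify that \textbf{M1}--\textbf{M3} imply \textbf{C1}--\textbf{C3} for the auxiliary DT game at $\hat{\beta}=\|\mu\|/(\|\mu\|+\hat{\alpha})$. \textbf{C1} is immediate from \textbf{M1}. For \textbf{C2}, \textbf{M2} gives $\bar{P}(f^*,g^*)=Q(f^*,g^*)/\|\mu\|+I$, whose rows are all $(p_1,\ldots,p_{|S|})$; this is the SIT structure required. For \textbf{C3}, the uniformized one-step transition probability $\bar{p}(s'|s,a^1,a_s^2)$ is exactly $\mu(s',s,a^1,a_s^2)/\|\mu\|+\delta(s,s')$, so once the common scaling factor $1/(\|\mu\|+\alpha)$ on the rewards cancels on both sides of \textbf{C3}, the resulting inequalities are precisely \textbf{M3}. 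Theorem \ref{Blackwell_disc_thm1} then supplies a threshold $\beta_0\in[0,1)$ such that $(f^*,g^*)$ is a $\beta$-discounted Nash equilibrium of the auxiliary DT game for every $\beta\in[\beta_0,1)$. Setting $\alpha_0=\|\mu\|(1-\beta_0)/\beta_0>0$, the monotone map $\alpha\mapsto\|\mu\|/(\|\mu\|+\alpha)$ sends $(0,\alpha_0]$ onto $[\beta_0,1)$, so $(f^*,g^*)$ is an $\alpha$-discounted Nash equilibrium of the CT game for every $\alpha\in(0,\alpha_0]$, i.e.\ a BNE.

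The main obstacle is the rigorous justification of the two-player uniformization step: \cite{Serfozo} handles only the single-agent case, and one must observe that when $g^*$ (resp.\ $f^*$) is fixed, the best-response optimality of $f^*$ (resp.\ $g^*$) in CTMDP$(g^*)$ (resp.\ CTMDP$(f^*)$) translates via \eqref{pl1-eqdata} and \eqref{pl2-eqdata} into best-response optimality in the corresponding DTMDP, and that the pair of best-response statements at the common scaling $\beta=\|\mu\|/(\|\mu\|+\alpha)$ is what defines the Nash equilibrium of the auxiliary DT game. A purely self-contained alternative, closer in flavour to the proof of Theorem \ref{Blackwell_disc_thm1}, is to compute $v_\alpha^{i*}$ in closed form from \textbf{M2} (using that the matrix in \textbf{M2} is rank-one after a shift, so $(\alpha I-Q(f^*,g^*))^{-1}$ has an explicit $1/\alpha$ expansion), plug into the CT Bellman inequalities $r^i(s,a^i,a_s^{-i})+\sum_{s'}\mu(s',s,a^i,a_s^{-i})v_\alpha^{i*}(s')\le \alpha v_\alpha^{i*}(s)$, and observe that \textbf{M3} forces the leading $1/\alpha$ coefficient to be non-positive, producing the bound $\alpha_0$ by taking the worst-case over the finitely many deviations.
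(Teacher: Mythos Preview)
Your proposal is correct and rests on the same mechanism as the paper: uniformize each player's best-response CTMDP to a DTMDP via \eqref{pl1-eqdata}--\eqref{pl2-eqdata}, use the SIT structure coming from \textbf{M2} to compute the value vector explicitly, and let \textbf{M3} control the sign of the deviation terms uniformly in $\beta$ near~$1$. The only organizational difference is that you package both best-response problems into a single auxiliary two-player discrete game and invoke Theorem~\ref{Blackwell_disc_thm1} wholesale, whereas the paper treats player~1 and player~2 separately and redoes the $\theta^i_{s,a^i}$ computation in parallel with the proof of Theorem~\ref{Blackwell_disc_thm1}, obtaining the player-wise thresholds $\alpha_0^1,\alpha_0^2$ and then $\alpha_0=\min\{\alpha_0^1,\alpha_0^2\}$. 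Your route is more modular; the paper's is self-contained and delivers the explicit bound directly.

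One small slip to fix: you define the auxiliary rewards as $r^i(s,a^1,a^2)/(\|\mu\|+\alpha)$, which makes them $\alpha$-dependent and would, taken literally, prevent applying Theorem~\ref{Blackwell_disc_thm1} (which varies $\beta$ against a \emph{fixed} game). Simply take the auxiliary DT game with unscaled rewards $r^i$ and kernel $\bar p(s'|s,a^1,a^2)=\mu(s',s,a^1,a^2)/\|\mu\|+\delta(s,s')$; the positive scalar $1/(\|\mu\|+\alpha)$ affects neither the Nash property nor the homogeneous inequalities in \textbf{C3}, as you already note when checking \textbf{M3}$\Rightarrow$\textbf{C3}. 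Your handling of the ``two-player uniformization'' issue---applying \cite{Serfozo} to each fixed-opponent CTMDP separately and reading the pair of best-response statements at the common $\beta=\|\mu\|/(\|\mu\|+\alpha)$ as a Nash equilibrium of the auxiliary game---is exactly what the paper does.
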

 
 \begin{proof}
Let $(f^*,g^*)$ be a stationary Nash equilibrium of a continuous time discounted stochastic game at some discount rate $\hat{\alpha}>0$.
Then, $f^*$ is an optimal policy of  CTMDP($g^*$) at discount rate $\hat{\alpha}$.
Therefore,  $f^*$ is an optimal policy of the equivalent DTMDP($g^*$), defined by \eqref{pl1-eqdata},
at $\hat{\beta}=\frac{||\mu||}{\alpha+||\mu||}$ \cite{Serfozo}.
We are interested in the range of $\beta$ for which $f^*$ is an optimal policy of DTMDP($g^*$).
That is, the range of $\beta$ for which the optimality equations for the DTMDP($g^*$) given below are satisfied by $f^*$,
 \begin{equation}\label{DT_MDP_opteq2}
 u^{1*}(s)=\bar{r}^1(s,a_s^1)+\beta \sum_{s'\in S} p^1(s'|s,a_s^1) u^{1*}(s'), \ \forall \ s\in S,
\end{equation}
and 
\begin{equation}\label{DT_MDP_opteq3}
 u^{1*}(s)\ge\bar{r}^1(s,a^1)+\beta \sum_{s'\in S} p^1(s'|s,a^1) u^{1*}(s'), \ \forall \ s\in S, a^1\in A^1(s), a^1\neq a_s^1,
\end{equation}
where $u^{1*}$ is the value vector of player 1 for $f^*$.
That is,
$$u^{1*}=u_\beta^1(f^*)=(I-\beta P^1(f^*))^{-1}\bar{r}^1(f^*),$$
where transitions probability matrix induced by $f^*$ for DTMDP($g^*$) is given by,
\[
 P^1(f^*)=\frac{Q(f^*,g^*)}{||\mu||}+I=\begin{matrix}
\begin{pmatrix}
  p_1 & p_2 &  \cdots & p_{|S|} \\
  p_1 & p_2 &  \cdots & p_{|S|} \\
  \vdots&  \vdots&  &\vdots\\
  p_1 & p_2 &  \cdots & p_{|S|}
 \end{pmatrix}
\end{matrix}.
\]
 From direct calculation we have,
\[
 u^1(f^*)=\bar{r}^1(f^*)+\frac{\beta}{1-\beta}\sum_{s\in S}p_s\bar{r}^1(s,a_s^1)\textbf{1}_{|S|}.
\]
It is easy to see that  \eqref{DT_MDP_opteq2} holds. 
Denote,
\begin{equation}\label{DT_MDP_opteq4}
 \theta_{s,a^1}^1=\bar{r}^1(s,a^1)+\beta \sum_{s'\in S} p^1(s'|s,a^1) u^{1*}(s')-u_\beta^{1*}(s).
\end{equation}
for all $s\in S$, $a^1\in A^1(s)$, $a^1\neq a_s^1$.
By substituting the value of $u^{1*}$ in
\eqref{DT_MDP_opteq4}, we have 

\begin{equation}\label{DT_MDP_opteq5}
\theta_{s,a^1}^1= \big(\bar{r}^1(s,a^1)-\bar{r}^1(s,a_s^1)\big)
-\beta\left(\sum_{s'\in S}p_{s'}\bar{r}^1(s',a_{s'}^1)-\sum_{s'\in S}p^1(s'|s,a^1)
\bar{r}^1(s',a_{s'}^1)\right),
\end{equation}
for all   $s\in S$, $a^1\in A^1(s)$, $a^1\neq a_s^1$.
If $\sum_{s'\in S}p_{s'}\bar{r}^1(s',a_{s'}^1)=\sum_{s'\in S}p(s'|s,a^1)\bar{r}^1(s',a_{s'}^1)$ 
for some $s\in S$, $a^1\in A^1(s)$, $a^1\neq a_s^1$,
then, \eqref{DT_MDP_opteq5} is independent of $\beta$.  Therefore,
$\theta_{s,a^1}^1\leq 0, \forall \ \beta$ because it holds  for $\hat{\beta}$. 
 In other cases from {\bf M3}, we have, 
$\theta_{s,a^1}^1\leq 0, \forall \ \beta \geq \beta_{s,a^1}^1$, 
where 
\begin{align}\label{eee1}
 \beta_{s,a^1}^1&=\frac{\bar{r}^1(s,a^1)-\bar{r}^1(s,a_s^1)}{\sum_{s'\in S}p_{s'}\bar{r}^1(s',a_{s'}^1)-\sum_{s'\in S}p^1(s'|s,a^1)
\bar{r}^1(s',a_{s'}^1)}\nonumber\\
&=\frac{r^1(s,a^1,a_s^2)-r^1(s,a_s^1,a_s^2)}{\sum_{s'\in S}p_{s'}r^1(s',a_{s'}^1,a_{s'}^2)
-\sum_{s'\in S}\left(\frac{\mu(s',s,a^1,a_s^2)}{||\mu||}+\delta(s,s')\right)
r^1(s',a_{s'}^1,a_{s'}^2)}.
\end{align}
It is clear that $\beta_{s,a^1}^1\leq\hat{\beta}<1$, because $f^*$ is an optimal policy at $\hat{\beta}$. 
Define,
\begin{equation}\label{eee2}
 \beta_0^1=\max_{s\in S,a^1\in A^1(s),a^1\neq a_s^1}\{0,\beta_{s,a^1}^1\},
\end{equation}
whenever $\beta_{s,a^1}^1$ is well defined. We include ``0'' in \eqref{eee2} because 
$\beta_{s,a^1}^1$ defined by \eqref{eee1} can be negative. Now, $f^*$ is an optimal policy of the DTMDP($g^*$)
for all $\beta\in [\beta_0^1,1)$. 
From \cite{Serfozo}, $f^*$ is an optimal policy of the  CTMDP($g^*$) for all $\alpha\in (0,\alpha_0^1]$, where,
$$\alpha_0^1=\frac{(1-\beta_0^1)||\mu||}{\beta_0^1}.$$
Therefore, $f^*$ is a best response of $g^*$ for all $\alpha\in (0,\alpha_0^1]$.

For fixed $f^*$, player 2 faces a CTMDP($f^*$) whose optimal policy is $g^*$ at discount rate $\hat{\alpha}$.
Therefore, $g^*$ is an optimal policy of player 2 for  the equivalent 
 DTMDP($f^*$), defined by \eqref{pl2-eqdata},
 at $\hat{\beta} = \frac{||\mu||}{\hat{\alpha}+||\mu||}$.
We are interested in finding the range of $\beta$ for which the optimality equations for DTMDP($f^*$) given below
are satisfied at $g^*$.
\begin{equation}\label{DT_MDP_opteq7}
 u^{2*}(s)=\bar{r}^2(s,a_s^2)+\beta \sum_{s'\in S} p^2(s'|s,a_s^2) u^{2*}(s'), \ \forall \ s\in S.
\end{equation}
\begin{equation}\label{DT_MDP_opteq8}
 u^{2*}(s)\ge\bar{r}^2(s,a^2)+\beta \sum_{s'\in S} p^2(s'|s,a^2) u^{2*}(s'), \ \forall \ s\in S, a^2\in A^2(s), a^2\neq a_s^2,
\end{equation}
where $u^{2*}$ is the value vector of player 2 at $g^*$. The transition probability matrix induced by $g^*$ for 
DTMDP($f^*$) is given by,
\[
 P^2(g^*)=\frac{Q(f^*,g^*)}{||\mu||}+I.
\]
As similar to previous case,
$$u^{2*}=[I-\beta P^2(g^*)]^{-1}\bar{r}^2(g^*)=\bar{r}^2(g^*)
+\frac{\beta}{1-\beta}\sum_{s\in S} p_s \bar{r}^2(s,a_s^2) {\bf 1}_{|S|}.$$ 
It is clear that  \eqref{DT_MDP_opteq7} holds. Denote,
 \begin{equation}\label{DT_MDP_opteq9}
  \theta_{s,a^2}^2=\bar{r}^2(s,a^2)+\beta \sum_{s'\in S} p^2(s'|s,a^2) u^{2*}(s')-u^{2*}(s),
  \ \forall \ s\in S, a^2\in A^2(s), a^2\neq a_s^2.
 \end{equation}
 By substituting the value of $u^{2*}$ in \eqref{DT_MDP_opteq9} we have 
 \begin{equation}\label{DT_MDP_opteq10}
  \theta_{s,a^2}^2=\big(\bar{r}^2(s,a^2)-\bar{r}^2(s,a_s^2)\big)-\beta\left(\sum_{s'\in S} p_{s'} \bar{r}^2(s',a_{s'}^2)
  -\sum_{s'\in S} p^2(s'|s,a^2) \bar{r}^2(s',a_{s'}^2)\right),  
 \end{equation}
 for all  $s\in S$, $a^2\in A^2(s)$, $a^2\neq a_s^2$. If $\sum_{s'\in S} p_{s'} \bar{r}^2(s',a_{s'}^2)
  =\sum_{s'\in S} p(s'|s,a^2) \bar{r}^2(s',a_{s'}^2)$ for some $s\in S$, $a^2\in A^2(s)$, $a^2\neq a_s^2$,  then \eqref{DT_MDP_opteq10}
  is independent of $\beta$ and $\theta_{s,a^2}^2\leq 0$ for all $\beta$. In other cases from {\bf M3} we have, 
  $\theta_{s,a^2}^2\leq 0$ for all $\beta\geq \beta_{s,a^2}^2$, where 
  
  \begin{align*}
  \beta_{s,a^2}^2&=\frac{\bar{r}^2(s,a^2)-\bar{r}^2(s,a_s^2)}{\sum_{s'\in S} p_{s'} \bar{r}^2(s',a_{s'}^2)
  -\sum_{s'\in S} p^2(s'|s,a^2) \bar{r}^2(s',a_{s'}^2)}\nonumber\\
  &=\frac{r^2(s,a_s^1,a^2)-r^2(s,a_s^1,a_s^2)}{\sum_{s'\in S} p_{s'} r^2(s',a_{s'}^1,a_{s'}^2)
  -\sum_{s'\in S} \left(\frac{\mu(s',s,a_s^1,a^2)}{||\mu||}+\delta(s,s')\right) r^2(s',a_{s'}^1,a_{s'}^2)}. 
  \end{align*}
It is clear that $\beta_{s,a^2}^2\leq \hat{\beta}<1$, because $g^*$ is an optimal policy at $\hat{\beta}$. Define 
\begin{equation*}
 \beta_0^2=\max_{s\in S,a^2\in A^2(s),a^2\neq a_s^2}\{0,\beta_{s,a^2}^2\},
\end{equation*}
whenever $\beta_{s,a^2}^2$ is well defined. This implies that $g^*$ is an optimal policy of the DTMDP($f^*$) 
for all $\beta\in [\beta_0^2,1)$.
From \cite{Serfozo}, $g^*$ is an optimal policy of the CTMDP($f^*$) for all $\alpha\in (0,\alpha_0^2]$, where
$$\alpha_0^2=\frac{(1-\beta_0^2)||\mu||}{\beta_0^2}.$$
 That is, $g^*$ is a best response of $f^*$ for all $\alpha\in (0,\alpha_0^2]$. Define,
\begin{equation}\label{Blackwell_bound1}
 \alpha_0=\min\{\alpha_0^1,\alpha_0^2\}.
\end{equation}
We can say that $f^*$ and $g^*$ are best response of each other for all $\alpha\in (0,\alpha_0]$. So, $(f^*,g^*)$ is 
a Nash equilibrium of a continuous time $\alpha$-discounted stochastic game for all $\alpha\in (0,\alpha_0]$, i.e., it 
is a BNE. 
\end{proof}

Now, we give an example of a continuous time stochastic game 
that possess a Nash equilibrium which satisfies \textbf{M1}, \textbf{M2} and \textbf{M3}.

\begin{example}\label{ex3}
 We consider a 2 states  continuous time stochastic game where  both the players 
 have two actions at state 1 and  only one action at state 2 , i.e.,
$S=\{1,2\}$, $A^1(1)= A^2(1)= \{1,2\}$, $A^1(2)=A^2(2)=\{1\}$. 
The rewards of both the players and the transition rates for different combinations 
of states and actions are summarized in the Table \ref{Tb3c}. 

\begin{table}[ht]
\centering 
\caption{Immediate rewards and Transition Rates} 
\label{Tb3c}
\subfigure[$s=1$]{\resizebox{0.6\linewidth}{.9cm}
{
\begin{tabular}{|r|r|}
\hline
\backslashbox{(5,3)}{(0,0)} & \backslashbox{(2,3)} {(-1,1)}\\
\hline
\backslashbox{(3,4)}{(-1,1)} &\backslashbox{(4,2)}{(0,0)}\\
\hline
\end{tabular}
}}
\subfigure[$s=2$]{\resizebox{0.3\linewidth}{.5cm}
{
\begin{tabular}{|r|}
\hline
\backslashbox{(5,4)}{(0, 0)} \\
\hline
\end{tabular}.
}}
\end{table} 
\end{example}

\noindent We show that $(f^*,g^*)=((1,0),(0,1))$ is a Blackwell Nash equilibrium of the continuous time stochastic game  given
in above example.
We first show that $(f^*,g^*)$ is a Nash equilibrium at $\alpha=0.5$. From the data of the game $||\mu||=1$.
Fix $g^*=(0,1)$, then player 1 faces a CTMDP($g^*$). 
The optimal policy of CTMDP($g^*$) at discount rate $\alpha=0.5$ 
can be computed by solving an equivalent DTMDP($g^*$), defined by \eqref{pl1-eqdata}, 
at discount factor $\beta=\frac{1}{1+\alpha}=~0.67$.
 It is known that the optimal policy of 
a DTMDP exists among the class of stationary deterministic policies. 
Let $f_1=(1,0)$ and $f_2=(0,1)$ be two stationary deterministic policies 
 for player 1. From the above data, the transition probability matrices induced by $f_1$ and $f_2$ for DTMDP($g^*$) are given by, 

$$P^1(f_1)=
\begin{matrix}
 \begin{pmatrix}
0 ~ & 1\\
0 ~ & 1
\end{pmatrix}
\end{matrix}, 
\hspace{.25cm}
 P^1(f_2)=  \begin{matrix}
 \begin{pmatrix}
1 ~ & 0\\
0 ~ & 1
       \end{pmatrix}
\end{matrix}.
$$
We have, 
\begin{equation}\label{e5}
 u_{0.67}^1(f_1)=[I-0.67 P^1(f_1)]^{-1}\bar{r}^1(f_1)=(8,10).
\end{equation}
\begin{equation}\label{e6}
  u_{0.67}^1(f_2)=[I-0.67 P^1(f_2)]^{-1}\bar{r}^1(f_2)=(8,10).
\end{equation}
From \eqref{e5} and \eqref{e6}, $f_1=(1,0)=f^*$ and $f_2$ both are optimal policy of DTMDP($g^*$) at $\beta=0.67$. 
This implies $f^*$ is an
optimal policy of CTMDP($g^*$) at $\alpha =0.5$, i.e., $f^*$ is a best response of $g^*$. Fix $f^*=(1,0)$, then player 2 faces a 
CTMDP($f^*$).   
The optimal policy of CTMDP($f^*$)
can be computed by solving an equivalent DTMDP($f^*$) defined by \eqref{pl2-eqdata}.
Let $g_1=(1,0)$ and $g_2=(0,1)$ be  two stationary deterministic policies
 of player 2. The transition probability matrices induced by $g_1$ and $g_2$ are given by,

$$P^2(g_1)=
\begin{matrix}
 \begin{pmatrix}
1 ~ & 0\\
0 ~ & 1
\end{pmatrix}
\end{matrix}, 
\hspace{.25cm}
 P^2(g_2)=  \begin{matrix}
 \begin{pmatrix}
0 ~ & 1\\
0 ~ & 1
       \end{pmatrix}
\end{matrix}.
$$
We have, 
\begin{equation}\label{e7}
 u_{0.67}^2(g_1)=[I-0.67 P^2(g_1)]^{-1}\bar{r}^2(g_1)=(6,8).
\end{equation}
\begin{equation}\label{e8}
  u_{0.67}^2(g_2)=[I-0.67 P^2(g_2)]^{-1}\bar{r}^2(g_2)=(7.33,8).
\end{equation}
From \eqref{e7} and \eqref{e8} $g_2=(0,1)=g^*$ is an optimal policy of DTMDP($f^*$) at $\beta=0.67$. This implies $g^*$ is an
optimal policy of CTMDP($f^*$) at $\alpha =0.5$, i.e., $g^*$ is a best response of $f^*$. Hence $(f^*,g^*)$ is a Nash equilibrium 
 at $\alpha=0.5$. It easy to check that 
$(f^*,g^*)$ satisfies conditions \textbf{M1}, \textbf{M2} and \textbf{M3}. Hence, from Theorem \ref{Blackwell_disc_cont_thm1} it is a 
BNE. From \eqref{Blackwell_bound1}, $\alpha_0=0.5$, so $(f^*,g^*)$ is a Nash equilibrium for all
$\alpha\in (0,0.5]$. \\

\noindent \textbf{Second set  of sufficient conditions:}\label{second-set}\\
\textbf{N1.} $(f^*,g^*)$ is a pure strategy Nash equilibrium\\

\noindent \textbf{N2.} $$Q(f^*,g^*)=\begin{matrix}
\begin{pmatrix}
  0 ~ & 0 &  \cdots & 0 \\
   0 ~ & 0 &  \cdots & 0 \\
  \vdots&  \vdots&  &\vdots\\
  0 ~ & 0 &  \cdots & 0
 \end{pmatrix}
\end{matrix},
$$
i.e., all the states of Markov chain induced by $(f^*,g^*)$  are absorbing.
\smallskip

\noindent \textbf{N3.}
$\left\{ \indent \begin{array}{l}\displaystyle
 r^1(s,a^1,a_{s}^2)\geq \sum_{s'\in S}  \left(\frac{\mu(s',s,a^1,a_s^2)}{||\mu||}+\delta(s,s')\right) r^1(s',a_{s'}^1,a_{s'}^2),\\
\hspace{7cm}\; \forall\; s\in S, a^1\in A^1(s),\\ \vspace{.2cm}\\\displaystyle
 r^2(s,a_{s}^1,a^2)\geq \sum_{s'\in S} \left(\frac{\mu(s',s,a_s^1,a^2)}{||\mu||}+\delta(s,s')\right) r^2(s',a_{s'}^1,a_{s'}^2),\\
\hspace{7cm}\; \forall\; s\in S, a^2\in A^2(s).
\end{array}\right.$
\smallskip


\begin{theorem}\label{Blackwell_disc_cont_thm2}
 If $(f^*,g^*)$ is a stationary Nash equilibrium of a discounted continuous time stochastic game at some discount rate $\hat{\alpha}>0$
and satisfies the conditions \textbf{N1}, \textbf{N2} and \textbf{N3}, then it will be a BNE. 
\end{theorem}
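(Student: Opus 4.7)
The plan is to mirror the proof of Theorem \ref{Blackwell_disc_cont_thm1} via uniformization, while exploiting the much simpler structure of $Q(f^*,g^*)$ granted by \textbf{N2} (exactly as Theorem \ref{Blackwell_disc_thm2} simplified Theorem \ref{Blackwell_disc_thm1} in the discrete case). First I fix $g^*$: by \eqref{pl1-eqdata}, the CTMDP($g^*$) is equivalent to a DTMDP($g^*$) with discount factor $\beta=||\mu||/(\alpha+||\mu||)$, so it suffices to find $\beta_0^1\in[0,1)$ such that $f^*$ satisfies the DTMDP($g^*$) Bellman optimality equations for every $\beta\in[\beta_0^1,1)$. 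A symmetric argument will handle $g^*$ against $f^*$.

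Because \textbf{N2} states $Q(f^*,g^*)=0$, the uniformized transition matrix is $P^1(f^*)=Q(f^*,g^*)/||\mu||+I=I$, so the value vector collapses to $u^{1*}(s)=\bar r^1(s,a_s^1)/(1-\beta)$. The optimality equation at action $a_s^1$ is automatic. For each $s$ and each $a^1\neq a_s^1$ I would form
\[
\theta^1_{s,a^1}=\bar r^1(s,a^1)+\beta\sum_{s'\in S}p^1(s'|s,a^1)\,\frac{\bar r^1(s',a_{s'}^1)}{1-\beta}-\frac{\bar r^1(s,a_s^1)}{1-\beta},
\]
substitute $p^1(s'|s,a^1)=\mu(s',s,a^1,a_s^2)/||\mu||+\delta(s,s')$ and $\bar r^1=r^1/(||\mu||+\alpha)$, and pull out the common positive factor $1/((||\mu||+\alpha)(1-\beta))=1/\alpha$ to obtain
\[
\theta^1_{s,a^1}=\frac{1}{(||\mu||+\alpha)(1-\beta)}\Bigl[\bigl(r^1(s,a^1,a_s^2)-r^1(s,a_s^1,a_s^2)\bigr)-\beta\,D^1_{s,a^1}\Bigr],
\]
where $D^1_{s,a^1}=r^1(s,a^1,a_s^2)-\sum_{s'}\bigl(\mu(s',s,a^1,a_s^2)/||\mu||+\delta(s,s')\bigr)r^1(s',a_{s'}^1,a_{s'}^2)$ is non-negative by the first inequality of \textbf{N3}. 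When $D^1_{s,a^1}=0$, the bracket is independent of $\beta$ and inherits its sign from $\hat\beta$, where it is non-positive since $(f^*,g^*)$ is already a Nash equilibrium at $\hat\alpha$. Otherwise $\theta^1_{s,a^1}\leq 0$ for every $\beta\geq\beta^1_{s,a^1}:=(r^1(s,a^1,a_s^2)-r^1(s,a_s^1,a_s^2))/D^1_{s,a^1}$, and Nash optimality at $\hat\beta$ forces $\beta^1_{s,a^1}\leq\hat\beta<1$.

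Setting $\beta_0^1=\max_{s,\,a^1\neq a_s^1}\{0,\beta^1_{s,a^1}\}$ over the indices where the ratio is defined, and translating back via $\alpha_0^1=(1-\beta_0^1)||\mu||/\beta_0^1>0$, the uniformization equivalence of \cite{Serfozo} makes $f^*$ a best response to $g^*$ in CTMDP($g^*$) for every $\alpha\in(0,\alpha_0^1]$. A verbatim argument with the second inequality of \textbf{N3} produces $\alpha_0^2>0$ such that $g^*$ is a best response to $f^*$ for every $\alpha\in(0,\alpha_0^2]$. Taking $\alpha_0=\min\{\alpha_0^1,\alpha_0^2\}$ exhibits $(f^*,g^*)$ as a Nash equilibrium for every $\alpha\in(0,\alpha_0]$, which is exactly the definition of a BNE. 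The main obstacle is not a new idea but careful bookkeeping: managing the factor $1/(||\mu||+\alpha)$ in $\bar r^i$ so that, after cancellation, \textbf{N3} (written in the original rewards $r^i$ and rates $\mu$) translates precisely into the desired sign on $\theta^i_{s,a^i}$, and confirming that the degenerate case $D^i_{s,a^i}=0$ is absorbed by Nash optimality at $\hat\beta$.
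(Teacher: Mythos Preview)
Your proposal is correct and follows exactly the route the paper takes: the paper's own proof simply says ``follows using the similar arguments as in Theorem \ref{Blackwell_disc_cont_thm1}'' and then records the resulting threshold formulas $\beta^i_{s,a^i}$, $\beta_0^i$, $\alpha_0^i$, and $\alpha_0=\min\{\alpha_0^1,\alpha_0^2\}$, which coincide with the ones you derive. Your write-up merely fills in the bookkeeping (uniformization, $P^1(f^*)=I$ from \textbf{N2}, the closed form $u^{i*}(s)=\bar r^i(s,a_s^i)/(1-\beta)$, and the linear-in-$\beta$ expression for $\theta^i_{s,a^i}$) that the paper leaves implicit.
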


\begin{proof}
 The proof follows using the similar arguments as in Theorem \ref{Blackwell_disc_cont_thm1}. The required discount rate $\alpha_0$ is 
 given by 
 \begin{equation}\label{Blackwell-bound}
  \alpha_0=\min\{\alpha_0^1,\alpha_0^2\},
 \end{equation}
 where $\alpha_0^i=\frac{(1-\beta_0^i)||\mu||}{\beta_0^i}$, $i=1,2$. The bounds $\beta_0^i$, $i=1,2$, can be calculated from \eqref{bounds}
 \begin{equation}\label{bounds}
  \beta_0^i=\max_{s\in S,a^i\in A^i(s),a^i\neq a_s^i}\{0,\beta_{s,a^i}^i\}, \ i=1,2,
 \end{equation}
where the bounds $\beta_{s,a^1}^1$ and $\beta_{s,a^2}^2$, whenever well defined, are given by 
\begin{equation}
\beta_{s,a^1}^1=\frac{r^1(s,a^1,a_s^2)-r^1(s,a_s^1,a_s^2)}{r^1(s,a^1,a_s^2)
-\sum_{s'\in S}\left(\frac{\mu(s',s,a^1,a_s^2)}{||\mu||}+\delta(s,s')\right) r^1(s',a_{s'}^1,a_{s'}^2)},
\end{equation}

\begin{equation}
 \beta_{s,a^2}^2=\frac{r^2(s,a_s^1,a^2)-r^2(s,a_s^1,a_s^2)}{r^2(s,a_s^1,a^2)-\sum_{s'\in S}
 \left(\frac{\mu(s',s,a_s^1,a^2)}{||\mu||}+\delta(s,s')\right)r^2(s',a_{s'}^1,a_{s'}^2)}, 
\end{equation}
for all $s\in S, a^1\in A^1(s), a^1\neq a_s^1, a^2\in A^2(s), a^2\neq a_s^2$.
\end{proof}

Now, we give an example and show that  there exists 
a stationary Nash equilibrium which satisfies  conditions \textbf{N1}, \textbf{N2} and \textbf{N3}.

\begin{example}\label{ex2}
 We consider a 2 states  continuous time stochastic game where  both the players 
 have two actions at state 1 and  only one action at state 2 , i.e.,
$S=\{1,2\}$, $A^1(1)= A^2(1)= \{1,2\}$, $A^1(2)=A^2(2)=\{1\}$. 
The rewards of both the players and the transition rates for different combinations 
of states and actions are summarized in the Table \ref{Tb4c}. 

\begin{table}[ht]
\centering 
\caption{Immediate rewards and Transition Rates} 
\label{Tb4c}
\subfigure[$s=1$]{\resizebox{0.6\linewidth}{.9cm}
{
\begin{tabular}{|r|r|}
\hline
\backslashbox{(4,4.4)}{(0,0)} & \backslashbox{(4,5)} {(-1,1)}\\
\hline
\backslashbox{(5,6)}{(-1,1)} &\backslashbox{(3,2)}{(0,0)}\\
\hline
\end{tabular}
}}
\subfigure[$s=2$]{\resizebox{0.3\linewidth}{.5cm}
{
\begin{tabular}{|r|}
\hline
\backslashbox{(3,4)}{(0, 0)} \\
\hline
\end{tabular}.
}}
\end{table} 
\end{example}
\noindent The Example \ref{ex2} can be viewed as a continuous time version of Example \ref{Ex-sec-set}.
We show  that $(f^*,g^*)=((1,0),(1,0))$ is a BNE.  We first show that $(f^*,g^*)$ is a Nash equilibrium at 
$\alpha=\frac{2}{3}$.  From data of the game $||\mu||=1$. 
Fix  $g^*=(1,0)$, then first player faces a CTMDP($g^*$). 
The optimal policy of CTMDP($g^*$) at $\alpha=\frac{2}{3}$ can be computed by solving 
the equivalent DTMDP($g^*$),  defined by \eqref{pl1-eqdata}, at $\beta=\frac{1}{1+\alpha}=0.6$.
Let $f_1=(1,0)$ and $f_2=(0,1)$ be two stationary deterministic policies. 
Using the above data, the transition probability matrices induced by $f_1$ and $f_2$ for DTMDP($g^*$) are given by,

$$P^1(f_1)=
\begin{matrix}
 \begin{pmatrix}
1 ~ & 0\\
0 ~ & 1
\end{pmatrix}
\end{matrix}, 
\hspace{.25cm}
 P^1(f_2)=  \begin{matrix}
 \begin{pmatrix}
0 ~ & 1\\
0 ~ & 1
       \end{pmatrix}
\end{matrix}.
$$
 We have
\begin{equation} \label{e1}
u_{0.6}^1(f_1)=[I-0.6 P(f_1)]^{-1}\bar{r}^1(f_1)=(6,4.5).
\end{equation}

\begin{equation} \label{e2}
u_{0.6}^1(f_2)=[I-0.6 P(f_2)]^{-1}\bar{r}^1(f_2)=(5.7,4.5).
\end{equation}
From \eqref{e1} and \eqref{e2} $f^1=(1,0)=f^*$ is the optimal policy of DTMDP($g^*$). Therefore, $f^*$ is the optimal 
policy of CTMDP($g^*$), i.e., $f^*$ is best response of $g^*$. Now, fix $f^*$, then player 2 faces CTMDP($f^*$).
To compute the optimal policy of CTMDP($f^*$) at $\alpha=\frac{2}{3}$, we solve the equivalent DTMDP($f^*$) defined by 
\eqref{pl2-eqdata} at $\beta=0.6$.
Let  $g_1=(1,0)$ and $g_2=(0,1)$ be two
stationary deterministic policies for player 2. Using the above data, the 
transition probability matrices induced by $g_1$ and $g_2$ for DTMDP($f^*$) are given by,

$$P^1(g_1)=
\begin{matrix}
 \begin{pmatrix}
1 ~ & 0\\
0 ~ & 1
\end{pmatrix}
\end{matrix}, 
\hspace{.25cm}
 P^1(g_2)=  \begin{matrix}
 \begin{pmatrix}
0 ~ & 1\\
0 ~ & 1
       \end{pmatrix}
\end{matrix}.
$$
We have
\begin{equation} \label{e3}
u_{0.6}^2(g_1)=[I-0.6 P(g_1)]^{-1}\bar{r}^2(g_1)=(6.6,6).
\end{equation}

\begin{equation} \label{e4}
u_{0.6}^2(g_2)=[I-0.6 P(g_2)]^{-1}\bar{r}^2(g_2)=(6.6,6).
\end{equation} 
From \eqref{e3} and \eqref{e4}, $g_1$ and $g_2$ both are the optimal policies of DTMDP($f^*$) at $\beta=0.6$. 
This implies $g^*=g_1$ is the best response of $f^*$ at $\alpha=\frac{2}{3}$. Hence $(f^*,g^*)$ is a Nash equilibrium  
at $\alpha=\frac{2}{3}$.
It is easy to check that all the conditions 
\textbf{N1},
\textbf{N2},
\textbf{N3} are satisfied at $(f^*,g^*)$. Hence, from Theorem \ref{Blackwell_disc_cont_thm2} $(f^*,g^*)$ is a BNE.
From \eqref{Blackwell-bound}, $\alpha_0=\frac{2}{3}$, i.e., $(f^*,g^*)$ is a Nash equilibrium for all $\alpha\in \left(0,\frac{2}{3}\right]$.

\section{Conclusions}\label{Conclusions}
We study BNE in both discrete and continuous time stochastic games.
We give counter examples to show that in general discrete as well as continuous time stochastic games
need not possess a stationary BNE. 
We show the existence of a BNE for SC-AR stochastic games. For general stochastic games 
we give two different sets of conditions that together are sufficient for a Nash equilibrium
to be a BNE. We give few examples which show that the Nash equilibria satisfying the proposed sufficient conditions
indeed exist.

\end{document}